\documentclass[a4paper]{amsart}
\usepackage{amssymb}
\usepackage[T1]{fontenc}
\usepackage[colorlinks, linkcolor=blue]{hyperref}
\usepackage{calrsfs}
\theoremstyle{plain}
\newtheorem{main}{Theorem}

\newtheorem{theorem}{Theorem}[section]
\newtheorem{lemma}[theorem]{Lemma}
\newtheorem{corollary}[theorem]{Corollary}
\newtheorem{proposition}[theorem]{Proposition}

\theoremstyle{definition}
\newtheorem{definition}[theorem]{Definition}
\newtheorem{example}[theorem]{Example}

\theoremstyle{remark}
\newtheorem{remark}[theorem]{Remark}

\DeclareMathOperator{\ord}{ord}
\DeclareMathOperator{\mdim}{mdim}

\DeclareMathOperator{\umdim}{\overline{mdim}_M}
\DeclareMathOperator{\lmdim}{\underline{mdim}_M}
\DeclareMathOperator{\omdim}{mdim^o_M}
\DeclareMathOperator{\capa}{cap}
\DeclareMathOperator{\ocap}{ocap}
\DeclareMathOperator{\supp}{supp}
\DeclareMathOperator{\spn}{span}

\begin{document}

\title[Mean dimension of general IFS]{Mean dimension of general iterated function systems}

\author[W. Cordeiro]{Welington Cordeiro}
\address{Instituto de Matemática e Computação, Universidade Federal de Itajubá
	Avenida BPS 1303, 37500-903 Pinheirinho, Itajubá MG
	Brazil}
\email{welington@unifei.edu.br}
\thanks{W. Cordeiro was partially supported by CAPES-Finance Code 001 and by CNPq Grant number 173057/2023-3}
\author[M. J. Pacifico]{Maria José Pacifico}
\address{Instituto de Matemática, Universidade Federal do Rio de Janeiro - UFRJ, Cidade Universitária - Ilha do Fundão, Rio de Janeiro  21945-909. Brazil}
\email{pacifico@im.ufrj.br}
\thanks{M. J. Pacifico was partially supported by CAPES-Finance Code 001, CNPq Projeto Universal No. 404943/2023-3, CNPq-Brazil grant 307776/2019-0 and by
	Foundation for Research Support of the State of Rio de Janeiro (FAPERJ) grant CNE
	E-26/200.913/2022(268181).}
\author[X. Zhang]{Xuan Zhang}
\address{Instituto de Matemática e Estatística, Universidade Federal Fluminense, Niterói, Rio de Janeiro, Brazil}
\email{xuanz@id.uff.br}
\thanks{X. Zhang was partially supported by CAPES-Finance Code 001}

\date{\today}

\subjclass[2020]{Primary 37C50; Secondary 37B40, 37D20}
\keywords{Hyperbolicity, Iterated Function Systems, Mean Dimension, Topological Entropy}

\begin{abstract}
In this paper, we introduce and investigate the notions of Mean Dimension and Metric Mean Dimension for generalized iterated function systems (IFS). We establish basic properties of these invariants and prove that Mean Dimension is always bounded above by the lower Metric Mean Dimension and the upper Metric Mean Dimension in this setting. We further show that generalized iterated function systems with the Small Boundary Property have zero Mean Dimension. Finally, we introduce a Gluing Orbit Property for generalized iterated function systems and prove that, under suitable transitivity and non-rigidity assumptions, it guarantees positive topological entropy.
\end{abstract}

\maketitle
\tableofcontents

\section{Introduction}

In \cite{G}, Gromov introduced the notion of \emph{mean dimension}, a topological invariant designed to capture the complexity of dynamical systems whose phase spaces may be infinite-dimensional. In contrast to topological entropy, one of the most classical quantitative measures of dynamical complexity, mean dimension is particularly effective in situations where entropy alone is too coarse to detect the size of the underlying dynamics. Motivated by this perspective, Lindenstrauss and Weiss \cite{LindenstraussWeiss2000} introduced the notion of \emph{metric mean dimension}, establishing a fundamental connection between topological and metric aspects of complexity. In particular, they proved that finite topological entropy implies zero metric mean dimension.

Since then, mean dimension and metric mean dimension have proved to be robust and flexible tools, and several variants have been developed in different dynamical settings. In \cite{RA}, Rodrigues and Acevedo extended metric mean dimension to non-autonomous dynamical systems. Later, Cheng and Li \cite{CL} introduced the notion of upper metric mean dimension for impulsive semiflows, a class of systems exhibiting discontinuous jumps in time. More recently, Tang, Ye, and Ma \cite{TYM} defined metric mean dimension for free semigroup actions on non-compact spaces. These developments highlight both the versatility of mean dimension theory and its relevance for increasingly general forms of dynamical behavior.

On the other hand, in \cite{DenkerYuri2015}, Denker and Yuri introduced a generalized class of iterated function systems (IFS), allowing the domains of the maps to vary. This framework substantially enlarges the classical notion of IFS and provides a natural setting in which symbolic, geometric, and thermodynamic aspects of dynamics interact. Generalized iterated function systems naturally generate families of orbit structures that are richer than those arising in a single-map dynamical system. Because of this, they provide a compelling framework to test whether dimensional invariants from topological dynamics remain meaningful beyond the classical autonomous setting.

For this generalized class, Denker, Cordeiro, and Yuri \cite{DenkerYuri2015, CordeiroDenkerYuri2015, Denker2023} developed a rich thermodynamic formalism, including notions of entropy, pressure, and conformal families of measures. Among other results, they showed that IFS with the specification property have positive entropy \cite{CordeiroDenkerYuri2015}, established the existence of conformal families of measures under finite pressure \cite{DenkerYuri2015}, and obtained estimates for Hausdorff dimension in the expanding case via a generalized Bowen--Manning--McCluskey type formula \cite{Denker2023}. Despite this substantial progress, the dimensional complexity of generalized IFS from the viewpoint of mean dimension has, to the best of our knowledge, not yet been investigated.

The purpose of this paper is to initiate such a study. More precisely, we introduce the notions of \emph{mean dimension} (denoted by $\mdim$) and \emph{upper and lower metric mean dimensions} (denoted by $\umdim$ and $\lmdim$) for generalized IFS in the sense of Denker and Yuri \cite{DenkerYuri2015}, and we establish foundational results showing that the basic structure of the classical theory extends to this broader setting.

Our first main result shows that these dimensional invariants satisfy the expected hierarchy.

\begin{main}\label{T1}
	Let $(X,\mathfrak{V})$ be an iterated function system. Then
	$$
	\mdim(X,\mathfrak{V}) \leq \lmdim(X,\mathfrak{V}) \leq \umdim(X,\mathfrak{V}).
	$$
\end{main}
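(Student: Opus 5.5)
The second inequality $\lmdim\le\umdim$ holds because a $\liminf$ never exceeds the corresponding $\limsup$, so all the work lies in $\mdim(X,\mathfrak{V})\le\lmdim(X,\mathfrak{V})$. I will adapt the Lindenstrauss--Weiss argument. The definitions I will use are the natural extensions of the classical ones to the Denker--Yuri setting. For a finite open cover $\alpha$ of $X$, let $\alpha^n$ be the join over all admissible words $\omega$ of length $<n$ of the pullbacks $\alpha_\omega=\{ v_\omega^{-1}(A)\cap X_\omega\}$ by the compositions of maps in $\mathfrak{V}$, equivalently the cover by Bowen-type sets in the dynamical metric $d_n$, where $d_n(x,y)=\max_{\omega}d(v_\omega x,v_\omega y)$ over admissible $\omega$ of length $<n$ with $x,y$ in the domain. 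Then $\mdim=\sup_\alpha\lim_n \mathcal{D}(\alpha^n)/n$ with $\mathcal{D}$ the minimal order of refinements, and $\lmdim=\liminf_{\varepsilon\to0}\lim_n \log S(X,d_n,\varepsilon)/(n|\log\varepsilon|)$, with $S$ the maximal separated or minimal spanning cardinality.

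The key step is the purely metric Lindenstrauss--Weiss lemma, which I will take as a black box applied to the compact metric spaces $(X,d_n)$. It states: for a compact metric space $(K,\rho)$ and an open cover $\alpha$ with Lebesgue number $>L$ and mesh controlled, for every $\varepsilon$ small,
\[
\mathcal{D}(\alpha)\le \frac{\log \#\,\text{($\varepsilon$-cover)}}{\log(L/\varepsilon)}+C,
\]
with $C$ independent of $K$. More precisely, choose an $\varepsilon$-spanning set $F$, put $\delta$-bump functions $f_j(x)=\max(0,L-\rho(x,A_j^c))$ for $A_j\in\alpha$, and take a generic perturbation of the induced map $K\to[0,1]^{|\alpha|}$ by a dimension-counting argument. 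Then preimages of points where too many coordinates are simultaneously positive are controlled by $\#F$, which yields a refinement whose order is about $\log S/\log(1/\varepsilon)$.

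Next I will check that the Lebesgue number transfers. If $L$ is a Lebesgue number of $\alpha$ in $d$, it is a Lebesgue number of $\alpha^n$ in $d_n$. Indeed, a $d_n$-ball of radius $L$ around $x$ is mapped by each admissible $v_\omega$ into a $d$-ball of radius $L$, hence into some element of $\alpha$, so the ball lies in an element of $\alpha^n$. This is the step where the variable domains matter. I must make sure that $x$ lies in every $X_\omega$ that contributes to the join, or at least that the join is formed only over admissible compositions at $x$. I must also make sure the pieces form an honest open cover, which needs the domains to be open in $X$ or the pullbacks to be intersected appropriately. I expect this bookkeeping, rather than the dimension-theoretic estimate, to be the main obstacle. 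If the domains are merely closed, I will first replace $\alpha_\omega$ by the cover with elements $v_\omega^{-1}(A)\cup (X\setminus X_\omega)$ when $X_\omega$ is clopen, and otherwise work with the $\varepsilon$-thickened relative covers.

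Finally, I will apply the lemma to get
\[
\frac{\mathcal{D}(\alpha^n)}{n}\le \frac{\log S(X,d_n,\varepsilon)}{n\log(L/\varepsilon)}+\frac{C}{n}.
\]
I then let $n\to\infty$ and take the $\liminf$ as $\varepsilon\to0$. Since $\log(L/\varepsilon)/|\log\varepsilon|\to1$, this gives $\lim_n\mathcal{D}(\alpha^n)/n\le\lmdim$. Taking the supremum over $\alpha$ proves the theorem.
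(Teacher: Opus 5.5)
\textbf{Main gap: your argument bounds the wrong invariants.} In the paper, $\mdim(X,\mathfrak{V})=\sup_{\sigma\in\Sigma}\mdim(X,\mathfrak{V},\sigma)$ involves only single-path joins $\alpha_0^{n-1}(\sigma)$. Likewise, $\lmdim(X,\mathfrak{V})$ is built from $r(n,\epsilon)$, which counts orbit pairs $(x,\sigma)\in\Psi(X)$, each followed along its own path.

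Your $\alpha^n$ (join over all admissible words) and $d_n$ (maximum over all words) define tree-type quantities instead. The right-hand side you end with, built from $S(X,d_n,\varepsilon)$, can be strictly larger than the paper's $\lmdim$. For example, take $X=[0,1]^{\mathbb{Z}}\times[0,1]^{\mathbb{Z}}$ with the max of the usual metrics, $v_1=s\times\mathrm{id}$ and $v_2=\mathrm{id}\times s$. Your $d_n$ sees about $n$ coordinates in each factor, so your quantity equals $2$. Along any single path only about $n$ coordinates in total are seen, so $r(n,\epsilon)\lesssim 2^n(c/\epsilon)^{n+O_\epsilon(1)}$ and the paper's $\lmdim$ equals $1$.

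Since $\mathcal{D}(\alpha_0^{n-1}(\sigma))\le\mathcal{D}(\alpha^n)$, your route only bounds the paper's $\mdim$ by your larger number. Moreover, for the all-words join $\mathcal{D}(\alpha^n)$ is not subadditive, so even the limit in your definition is unjustified.

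The missing step is the reduction to one path. Fix $\sigma$ and work on $\Sigma_\sigma$ with $d^\sigma_n$ and $\alpha_0^{n-1}(\sigma)$; this also disposes of the domain bookkeeping you expected to be the main obstacle. If $E\subset\Sigma_\sigma$ is $d^\sigma_n$-separated, then $\{(x,\sigma):x\in E\}\subset\Psi(X)$ is $(n,\epsilon)$-separated in the paper's sense. Hence the single-path count is at most $s(n,\epsilon)\le r(n,\epsilon/2)$, and you take $\sup_\sigma$ only at the end. This is the paper's passage through $\omdim$ (Lemmas~\ref{l2}, \ref{131} and \ref{l3}).

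\textbf{Second gap: the metric black box is not available in the form you use.} In the Lindenstrauss--Weiss counting argument, the union bound over sets of coordinates costs a factor $2^{\#\text{coordinates}}$ (the $2^{rN}$ in Lemma~\ref{4.3}). So at a fixed $\varepsilon$ the additive error is proportional to the number of coordinates of the map into the cube, not a constant independent of the space. Indeed, for sup-metric cubes $[0,1]^m$ with a product cover of fixed Lebesgue number and covering numbers $N_\varepsilon$, the defect $\mathcal{D}(\alpha)-\log N_\varepsilon/\log(L/\varepsilon)$ grows linearly in $m$.

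Because $n\to\infty$ before $\varepsilon\to0$, you need the estimate uniformly in $n$. With bump functions indexed by the elements of $\alpha^n$, the number of coordinates is exponential in $n$, so the step $C/n\to0$ fails. What works is the device the paper takes from Lindenstrauss--Weiss. Refine $\alpha$ to $\{U_1,V_1\}\vee\dots\vee\{U_r,V_r\}$ and use the $rn$ Lipschitz functions $w_i\circ v^{\sigma(j)}$. This gives an error of order $rn\log 2/|\log\varepsilon|$, which is negligible after dividing by $n$ and letting $\varepsilon\to0$.

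With these two corrections, your argument becomes the paper's.
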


Theorem \ref{T1} places mean dimension and metric mean dimension in a coherent structural framework for generalized IFS. Its proof requires the introduction of an intermediate invariant, which we call the \emph{orbit metric mean dimension}. This notion is of independent interest, as it provides a natural bridge between the topological and metric viewpoints and clarifies the role played by orbit complexity in this generalized setting.

We also extend to generalized IFS the notion of the \emph{Small Boundary Property} (SBP), introduced by Lindenstrauss and Weiss in \cite{LindenstraussWeiss2000}. In the classical theory, the SBP is closely related to the vanishing of mean dimension, and our second main result shows that this phenomenon persists in the present framework.

\begin{main}\label{T2}
	Let $(X,\mathfrak{V})$ be an iterated function system with the SBP. Then
	$$
	\mdim(X,\mathfrak{V}) = 0.
	$$
\end{main}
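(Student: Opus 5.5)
The plan is to mimic the classical argument of Lindenstrauss and Weiss, adapted to the orbit structure of a generalized IFS. Recall that $\mdim(X,\mathfrak{V})$ is defined as $\sup_{\alpha}\lim_{n}\frac{1}{n}\mathcal{D}(\alpha_n)$. Here $\alpha$ ranges over finite open covers of $X$, and $\alpha_n$ is the join of the pullbacks $\alpha$ along all admissible compositions of length at most $n$ of maps in $\mathfrak{V}$, restricted to their domains. The quantity $\mathcal{D}(\beta)$ is the minimal order of an open cover refining $\beta$. It therefore suffices to show that for every finite open cover $\alpha$ and every $\varepsilon>0$ there is $N$ such that $\mathcal{D}(\alpha_n)\le \varepsilon n$ for all $n\ge N$.

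First, fix $\alpha$ and use compactness to choose a Lebesgue number. By the SBP, pick a finite cover $\beta=\{U_1,\dots,U_k\}$ of $X$ by open sets of small diameter. We require that $\beta$ refines $\alpha$, that each boundary $\partial U_j$ is small, and that the closures are used so that $\{\overline{U_j}\}$ still refines $\alpha$. Being small means that the orbit capacity of $\partial U_j$ is $0$. In the IFS setting this says that the maximal fraction of times along any admissible orbit segment of length $n$ at which the orbit visits $\partial U_j$ tends to $0$ uniformly. Taking the union $E=\bigcup_j\partial U_j$, which is still small, choose $N$ so that every admissible segment of length $n\ge N$ visits $E$ at most $\varepsilon n/(k+1)$ times. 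Passing to a slightly enlarged open neighbourhood $W\supset E$ keeps this bound, by a compactness argument on finite-length orbit pieces. This uses the continuity of each map on its (closed) domain.

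Next, build a refinement of $\alpha_n$ of low order. Form the disjoint partition $P=\{U_1, U_2\setminus \overline{U_1},\dots\}$ together with $E$, and for each admissible word $w$ of length $\le n$ consider the itinerary of $x$ through $P$. The itinerary is constant on small open sets away from the times when the orbit lies in $W$. Adapting the standard construction, we take sets indexed by itineraries and thicken them slightly, using $W$ only at the bad times. Each point then lies in at most the product, over bad times, of $k$ choices. A more careful version assigns each point one of at most $k$ neighbouring pieces only at those times, giving order at most $k\cdot\#\{\text{bad times}\}\le \varepsilon n$. This parallels the proof in \cite{LindenstraussWeiss2000} that SBP implies $\mdim=0$, where one constructs a cover $\gamma\succ\alpha_n$ whose order is bounded by the number of visits to the boundary.

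The main obstacle is this last step. In a generalized IFS, orbits branch, since several maps apply at each point, and domains vary, so one point has many admissible orbit segments. The order bound must be uniform over all branches simultaneously, yet $\alpha_n$ joins over all of them. I would first try to organize the refinement via the symbolic tree of admissible words, treating each word separately and then taking the common refinement. If the order grows with the number of words, I would instead define smallness of sets via the supremum over all admissible segments. I would then use a partition-of-unity argument: build functions $\phi_j$ with supports in $\alpha$ and transitions only on $W$, and map $X$ into a cubical complex whose dimension is controlled by the number of bad coordinates. This gives an $\alpha_n$-compatible map to a polyhedron of dimension $\le\varepsilon n$, which bounds $\mathcal{D}(\alpha_n)$ directly. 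Dividing by $n$, letting $n\to\infty$ and then $\varepsilon\to0$ yields $\mdim(X,\mathfrak{V})=0$.
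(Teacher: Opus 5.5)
\textbf{Gap: the proposal bounds the wrong cover.} Your mechanism is the right one: refine $\alpha$ by sets with small boundaries, build a partition of unity whose transition region has small orbit capacity, map into a union of low-dimensional cubes, and bound $\mathcal{D}$ through a compatible map. But you apply it to the wrong cover, and that is exactly where your argument stalls.

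In this paper $\mdim(X,\mathfrak{V})$ is not defined through a join over all admissible words. It is $\sup_{\sigma\in\Sigma}\mdim(X,\mathfrak{V},\sigma)$, and $\mdim(X,\mathfrak{V},\sigma)$ uses only the join $\alpha_0^{n-1}(\sigma)=\bigvee_{i=0}^{n-1}v^{-\sigma(i)}\alpha$ along one fixed sequence $\sigma$. Your $\alpha_n$ is much finer; for infinite $\mathfrak{V}$ it is not even a finite open cover. A bound for it would suffice, but your counting does not deliver one. The estimate ``order $\le k\cdot\#\{\text{bad times}\}\le\varepsilon n$'' holds only when the coordinates are indexed by the $n$ times of a single path.

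Over the tree of words, $\ocap$ (already a supremum over all $\sigma\in\Sigma_x$) limits the bad times only on each branch separately. That is compatible with the total number of bad nodes of the word tree being exponential in $n$; for instance, all nodes at depth $\ge(1-\varepsilon)n$ could be bad. Taking the common refinement of per-word covers and using $\mathcal{D}(\beta\vee\gamma)\le\mathcal{D}(\beta)+\mathcal{D}(\gamma)$ likewise gives only an exponential bound. Neither of your fallbacks addresses this, so as written the proposal does not reach $\mathcal{D}\le\varepsilon n$.

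\textbf{Fix: work along one fixed $\sigma$.} The obstacle disappears once you use the actual definition, so fix $\sigma$ and work on $\Sigma_\sigma$.
\begin{enumerate}
\item Take $\beta=\{V_j\}$ with small boundaries and $\overline{V_j}\subset U_j\in\alpha$.
\item Choose $N$ and $\delta>0$ so that for every $x\in\Sigma_\sigma$, fewer than $\varepsilon N$ of the times $0\le i<N$ have $v^{\sigma(i)}(x)$ within $\delta$ of $\bigcup_j\partial V_j$. Your compactness argument is legitimate here, because only the finitely many maps $v_1,\dots,v_N$ are involved and $\Sigma_\sigma$ is closed. It is not available uniformly over all segments of an infinite $\mathfrak{V}$.
\item Build $\phi_j$ subordinate to $\alpha$, taking values in $\{0,1\}$ outside that neighbourhood.
\item Set $f_N(x)=(\Phi(x),\Phi(v^{\sigma(1)}(x)),\dots,\Phi(v^{\sigma(N-1)}(x)))$ with $\Phi=(\phi_1,\dots,\phi_{|\alpha|})$.
\end{enumerate}
Then $f_N$ is $\alpha_0^{N-1}(\sigma)$-compatible. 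Its image lies in the finite union of affine sets $\spn\{e^i_j: i\in I,\ 1\le j\le|\alpha|\}+\eta$ with $|I|<\varepsilon N$ and $\eta\in\{0,1\}^{|\alpha|N}$. By \cite[Proposition 2.4]{LindenstraussWeiss2000}, $\mathcal{D}(\alpha_0^{N-1}(\sigma))<\varepsilon|\alpha|N$. Letting $\varepsilon\to0$ and taking the supremum over $\sigma$ gives the theorem.

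This is the paper's proof (Lemma~\ref{LSBP} plus the map $f_N$). It is what your final paragraph becomes once the branching is removed by the definition rather than overcome.
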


Theorem \ref{T2} provides a concrete dynamical criterion ensuring the vanishing of mean dimension for generalized IFS, thereby extending one of the central structural implications of the classical theory. Taken together, Theorems \ref{T1} and \ref{T2} show that generalized iterated function systems admit a meaningful and robust mean dimension theory, and they provide a first step toward a broader dimensional approach to this class of systems.

Another aspect of the dynamical complexity of generalized iterated function systems that we address in this work concerns the relation between orbit-gluing properties and topological entropy. Motivated by analogous phenomena in classical dynamical systems, we introduce the notion of the \emph{Gluing Orbit Property} for generalized iterated function systems and investigate its dynamical consequences. Under mild recurrence and transitivity assumptions, we show that the absence of uniformly rigid behavior guarantees positive topological entropy. More precisely,

\begin{main}\label{T3}
	Let $(X,{\mathfrak V})$ be an iterated function system with the Gluing Orbit property, with $Tran(X,{\mathfrak V},\sigma)\neq\emptyset$ for some $\sigma\in \Sigma$ that is not uniformly rigid. 
	Then $h(X,{\mathfrak V})>0$. 
\end{main}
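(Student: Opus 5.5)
We will argue by adapting the classical scheme (Sun--Bomfim--Varandas type results for flows and maps with the gluing orbit property) to the generalized IFS setting of Denker and Yuri, where orbits are indexed by admissible words $\sigma\in\Sigma$ and the entropy $h(X,\mathfrak V)$ is computed from $(n,\varepsilon)$-separated sets along finite words. The strategy is to produce, for some $\varepsilon>0$ and all large $k$, at least $2^k$ points that are $(kN,\varepsilon)$-separated along suitable words of length comparable to $kN$. This gives exponential growth $h(X,\mathfrak V)\ge \frac{\log 2}{N+M}>0$, where $M$ bounds the gluing gaps.

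\textbf{Step 1: two distinguishable orbit pieces.} Fix $x\in Tran(X,\mathfrak V,\sigma)$, so the orbit of $x$ along $\sigma$ is dense. Since $\sigma$ is not uniformly rigid, there are $\varepsilon_0>0$ and arbitrarily large times $n$ such that the composition $f_{\sigma|n}$ moves some point $y=y_n$ by more than $3\varepsilon_0$ along $\sigma$, i.e.\ $d(f_{\sigma|n}(y),y)>3\varepsilon_0$. Using transitivity, we may take $y$ on (or close to) the orbit of $x$. This yields two finite orbit segments, namely ``stay'' (the piece of length $n$ starting near $y$) and ``move'' (a shifted piece), whose endpoints are $\varepsilon_0$-apart. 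In practice we take the pieces $(y,n)$ and $(f_{\sigma|n}(y),n)$ along the corresponding subwords of $\sigma$, which differ at time $0$ by more than $3\varepsilon_0$.

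\textbf{Step 2: gluing.} Apply the Gluing Orbit property with $\varepsilon=\varepsilon_0$ to obtain $M=M(\varepsilon_0)$ such that any finite sequence of orbit pieces can be $\varepsilon_0$-shadowed by a single orbit, with gaps $p_i\le M$, along a concatenated word. For each $\omega\in\{0,1\}^k$ we glue $k$ copies chosen from the two pieces according to $\omega$. This produces a point $z_\omega$ together with a word $\tau_\omega$.

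\textbf{Step 3: separation and counting.} Two different strings first disagree at some index $i$. At the start time of the $i$-th block, the shadowed points are within $\varepsilon_0$ of two points that are $3\varepsilon_0$ apart, so the orbits are at distance greater than $\varepsilon_0$ there. The difficulty is that the gaps $p_i$, and the connecting words, may depend on $\omega$, so block start times do not align. We handle this by pigeonholing: there are at most $(M+1)^k$ gap sequences, so some fixed gap sequence is shared by at least $2^k/(M+1)^k$ strings. To make this ratio exponentially large, we encode with an alphabet of size $L>M+1$ instead of size $2$, using $L$ pieces that are pairwise $3\varepsilon_0$-separated at time $0$. These are obtained by iterating the non-rigidity at several times, or by using density of the transitive orbit to pick $L$ separated starting points; the latter seems simpler and uses transitivity essentially. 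Once the gap sequence is fixed, the block start times coincide, giving a set of $(L/(M+1))^k$ points that are $(k(n+M),\varepsilon_0)$-separated. Separation is measured along a common word only if the connecting words coincide too, so we may need to pigeonhole over connecting words as well, using finiteness of the relevant choices, or to rely on the definition of entropy in Denker--Yuri, which takes a supremum over words.

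\textbf{Main obstacle.} The step we expect to be hardest is reconciling non-uniform rigidity with the word-dependent setting. Specifically, we must guarantee that the distinct pieces are compatible with admissible concatenations, and that the separation survives when the words used to glue differ between strings. If only transitivity is used to obtain $L$ separated points, the role of non-rigidity becomes ensuring that the pieces are not all $\varepsilon$-close to the identity along $\sigma$. Without non-rigidity, gluing copies of the same segment could produce only shadowing orbits near a rigid (equicontinuous-like) motion, and these give zero entropy.
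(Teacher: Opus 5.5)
Your proposal has a genuine gap in Step 3, and the cause is that non-rigidity is never actually used. Pigeonholing over gap sequences gives exponential growth only if you have an alphabet of $L>M(\varepsilon_0)+1$ pieces that are pairwise $3\varepsilon_0$-apart. But $M(\varepsilon_0)$ is fixed by the gluing property once $\varepsilon_0$ is chosen, and nothing forces $X$ to contain that many $3\varepsilon_0$-separated points. Shrinking $\varepsilon_0$ enlarges both quantities.

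Density of a transitive orbit cannot supply these points. Your choice of $L$ separated starting points uses only gluing and transitivity, so it applies verbatim to an irrational rotation $R_\alpha$ of the circle with $\mathfrak V=\{R_\alpha\}$. That system has the gluing orbit property, every orbit dense, and zero entropy. Your counting is valid whenever such an alphabet exists, since it would then produce positive entropy. So the rotation shows that, at every scale, the number of pairwise $3\varepsilon$-separated points can be at most $M(\varepsilon)+1$, and your step fails exactly there. The rotation is excluded from the theorem only because $R_\alpha^{q_k}\to Id$ uniformly, i.e.\ by the hypothesis your argument never invokes.

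Your alternative of ``iterating non-rigidity at several times'' is not developed. To rescue the pigeonhole route you would need pieces that are $(n,3\varepsilon_0)$-separated over a long window with $s(n,3\varepsilon_0)>M(\varepsilon_0)$. Deriving that unboundedness from the non-rigidity of $\sigma$ is precisely the missing argument.

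The idea you are missing is to exploit the misalignment of block start times instead of eliminating it. From non-rigidity of $\sigma$ and $p\in Tran(X,\mathfrak V,\sigma)$, the paper extracts a single $\gamma>0$ such that for \emph{every} shift $r\ge 1$ there is $\tau_r$ with $d(v^{\sigma(\tau_r)}(p),v^{\sigma(\tau_r+r)}(p))>\gamma$. In words, the $\sigma$-orbit of $p$ is distinguishable from each of its own time-shifts. Your Step 1 yields only one static difference at time $0$, which helps only when start times coincide.

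The paper then glues copies of the same $\sigma$-orbit segment of $p$ with just two lengths, $T+M+1$ and $T+1$, where $T=2M+\max_{r\le 2M}\tau_r$. Take codes $a\neq b$ and look at the first index where either the codes or the gaps differ. The next blocks of the two shadowing orbits then start with a relative offset $r\in[1,2M]$. Both orbits trace the same segment of $p$ shifted by $r$, so at time $\tau_r$ inside that block they are more than $\gamma-2\varepsilon>\varepsilon$ apart. This gives $2^N$ pairs that are $((N+1)(T+2M),\varepsilon)$-separated, with a binary alphabet and no pigeonholing, hence $h(X,\mathfrak V)\ge \log 2/(T+2M)$.

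Finally, your concern about connecting words is unnecessary. Separated sets are subsets of $\Psi(X)$, so the two orbits may run along different words. Pigeonholing over words would in any case be unavailable when $\mathfrak V$ is infinite.
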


Theorem \ref{T3} shows that the Gluing Orbit Property imposes a substantial degree of dynamical complexity on generalized iterated function systems. This provides a useful mechanism for detecting chaotic behavior in this non-autonomous and highly flexible setting.
Roughly speaking, this property allows one to concatenate finitely many prescribed orbit segments and shadow them with a single orbit, up to uniformly bounded transition times. In this setting, we prove that the coexistence of orbit-gluing and a suitable transitivity assumption forces the system to exhibit positive topological entropy, provided that a certain rigidity obstruction is absent.

The paper is organized as follows. In Section~\ref{sec:mdim}, we recall the definition of generalized iterated function systems and introduce the notions of mean dimension and metric mean dimension, together with some of their basic properties. In Section~\ref{sec:t1}, we prove Theorem~\ref{T1}. In Section~\ref{sec:sbp}, we define the Small Boundary Property for generalized IFS and prove Theorem~\ref{T2}.  In Section~\ref{sec:gop} we introduce the Gluing Orbit Property and relevant notions in Theorem~\ref{T3}, whose proof is given in Section~\ref{sec:t3}.

\section{Mean Dimension}\label{sec:mdim}
Let $X$ be a compact metric space and let $\mathfrak{V}$ be a family of homeomorphisms $v:D(v)\rightarrow v(D(v))\subset X$  with closed domain $D(v)\subset X$. The pair $(X,\mathfrak{V})$ is called a function system. It is called an {\em iterated function system} if there is a set $O\subset X$ such that,
\[\emptyset\neq \bigcup_{v\in\mathfrak{V}}v(D(v)\cap O)\subset\bigcup_{v\in\mathfrak{V}}D(v)\cap O.\]
This notion has been studied in \cite{DenkerYuri2015} where some general motivation to investigate iterated function systems of this general type is also discussed.

\begin{example}
	First we remember the definition of the metric of Gromov-Hausdorff on the space of all compact metric spaces. Let $X$ and $Y$ be compact metric spaces. The triple $(\tilde{X},\tilde{Y},Z)$ consisting of a compact metric space $Z$ and its subsets $\tilde{X}$ and $\tilde{Y}$, isometric to $X$ and $Y$, respectively, is called a \textit{realization} of $(X,Y)$. We define the \textit{Gromov-Hausdorff distance} between $X$ and $Y$ by
	$$d_{GH}(X,Y):=\inf\{r;\text{ there is a realization }(\tilde{X},\tilde{Y},Z)\text{ with }d_H(\tilde{X},\tilde{Y})\leq r\},$$ 
	where $d_H(A,B)$ is the \textit{Hausdorff} distance between $A$ and $B$, two compact metric subsets of a same metric space. By \cite[ Corollary 2.7]{IvandovIlliadisTuzhilin2016}, for each pair of compact metric spaces $X$ and $Y$, there is  a realization $(\tilde{X},\tilde{Y},Z)$ such that  
	$$d_{GH}(X,Y)=d_H(\tilde X, \tilde Y).$$
	Such realization will be called an \textit{optimal realization} for $X$ and $Y$. More generaly, by induction we say that $(\tilde{X}_1,\tilde{X}_2,..., \tilde{X}_n, \tilde{X}_{n+1}, Z_{n+1})$ is an \textit{optimal realization} of an ordered finite set $\{X_1, X_2,..., X_n, X_{n+1}\}$ of compact metric spaces, as an optimal realization for $ Z_n$ and $X_{n+1}$. Note that, in general, optimal realizations are not unique, even in the case of only two compact metric spaces.
	  
	Now consider a finite family $F=\{f_i:X_i\rightarrow X_i; 1\leq i \leq n\}$ of homeomorphisms $f_i$'s defined on compact metric spaces $X_i$'s. 
We fix an optimal realization $(\tilde{X}_1,\tilde{X}_2,..., \tilde{X}_n, Z_n)$.   Then $(Z_n, F)$ is a function system.
This example show that we can create a generalized function system using any finite set of dynamical systems defined in any finite number of compact metric spaces.
\end{example}

Let $(X,\mathfrak{V})$ be an iterated function system. We begin with a modification of topological entropy for the case of an IFS (see \cite{CordeiroDenkerYuri2015, Walters1982}). The basic idea is to count the minimal number of orbits from $(X,\mathfrak{V})$ which are needed to shadow all  orbits.  This leads to the following definitions.

Let $\Sigma=\{(v_1,v_2,...,v_n,...);v_i\in\mathfrak{V}\}$. According to  the definition of IFS, there exists $\sigma=(v_1,v_2,...)\in\Sigma$ and $x\in X$ such that 
$v_n\circ v_{n-1}\circ...\circ v_1(x)$ is well defined for each $n\in\mathbb N=\{1, 2,\ldots\}$. 
For a pair $(x, \sigma)\in X\times \Sigma$, let $v^{\sigma(n)}(x)=v_n\circ v_{n-1}\circ...\circ v_1(x)$ whenever it is well defined and $v^{\sigma(0)}(x)=x$, and define the \emph{orbit} of $(x,\sigma)$ as
\[O(x,\sigma)=\{x,v^{\sigma(1)}(x),v^{\sigma(2)}(x),...,v^{\sigma(n)}(x),...\}.\]
We also let 
 $v^{\sigma(n,m)}= v_n\circ v_{n-1}\circ...\circ v_{m+1}$ for $n>m\in\mathbb Z^+=\{0, 1, 2,\ldots\}$, so $v^{\sigma(n)}=v^{\sigma(n,0)}$.
Define $$\Sigma_x=\{\sigma\in\Sigma; v^{\sigma(n)}(x) \ \text{is well defined for all}\  n\in\mathbb{N} \},$$
and
$$\Sigma_\sigma=\{x\in X; v^{\sigma(n)}(x) \ \text{is well defined for all}\  n\in\mathbb{N} \}.$$
Both $\Sigma_x$ and $\Sigma_\sigma$ can be empty.

Let  \[\Psi(X)=\{ (x,\sigma): x\in X, \sigma\in \Sigma_x\}.\] We call a set 
$R\subset \Psi(X)$ an \emph{$(n,\epsilon)$-spanning set} of $X$, if for all $(x,\sigma)\in \Psi(X)$  there is $(y,\varphi) \in R$,  such that
\[d(v^{\sigma(i)}(x),v^{\varphi(i)}(y))<\epsilon, \ \forall i\in\{0, 1,...,n-1\}. \]
We denote the minimum cardinality of an $(n,\epsilon)$-spanning set for $X$ by $r(n,\epsilon)$.  Since we are not restricting the notion to the case of a finite family $\mathfrak {V}$, $r(n,\epsilon)$ may be infinite.
We call a set $S\subset \Psi(X)$ an \emph{$(n,\epsilon)$-separated set} for $X$, if for all pairs $(x,\sigma), (y,\varphi)\in S$ there is some   $i\in\{0, 1,...,n-1\}$ satisfying
\begin{equation*} d(v^{\sigma(i)}(x),v^{\varphi(i)}(y))\geq \epsilon.
\end{equation*}
We denote the maximum cardinality of an $(n,\epsilon)$-separated set for $X$ by  $s(n,\epsilon)$.

The \emph{topological entropy} of a IFS $(X,\mathfrak{V})$ is then given by
\begin{eqnarray*}
	h(X,\mathfrak{V})&:=&\lim_{\epsilon\rightarrow 0}\limsup_{n\rightarrow\infty}\frac{1}{n}\log  r(n,\epsilon) \\
	&=& \lim_{\epsilon\rightarrow 0}\limsup_{n\rightarrow\infty}\frac{1}{n}\log  s(n,\epsilon).
\end{eqnarray*}
The existence of the limits and their equality were proved in \cite{CordeiroDenkerYuri2015}.
Define the \emph{upper metric mean dimension of $(X,\mathfrak{V})$}
\begin{eqnarray*}
	\umdim(X,\mathfrak{V})&:=&\limsup_{\epsilon\rightarrow 0}\frac{1}{|\log(\epsilon)|}\limsup_{n\rightarrow\infty}\frac{1}{n}\log  r(n,\epsilon)
\end{eqnarray*}
and the \emph{lower metric mean dimension of $(X,\mathfrak{V})$}
\begin{eqnarray*}
	\lmdim(X,\mathfrak{V})&:=&\liminf_{\epsilon\rightarrow 0}\frac{1}{|\log(\epsilon)|}\limsup_{n\rightarrow\infty}\frac{1}{n}\log  r(n,\epsilon).
\end{eqnarray*}
When they are equal, define the \emph{metric mean dimension} to be the common value. The metric mean dimension depends on the metric $d$ on $X$. 
\begin{remark}
As in \cite{LindenstraussWeiss2000}, it is clear that the metric mean dimension of a generalized IFS is zero if it has finite topological entropy.
\end{remark}

Let $A\subset X$ be a closed subset. If $\alpha$ is a finite open cover of $A$, we say that a cover $\beta$ of $A$ refines $\alpha$, if for each $U\in\beta$ there is $V\in\alpha$ such that $U\subset V$. In this case we write $\beta\succ\alpha$. If $\alpha$ is an open cover of $A\subset X$ we define,
\begin{eqnarray*}
	\ord(\alpha,A):=\max_{x\in A}\sum_{U\in\alpha}1_U(x)-1  \text{ and } \mathcal{D}(\alpha):=\min_{\beta\succ\alpha}\ord(\beta,A),
\end{eqnarray*} 
where $\beta$ runs over finite open covers of $A$ refining $\alpha$. It is clear that $\mathcal{D}(\alpha)$ is subadditive, i.e. $$\mathcal{D}(\alpha\vee\beta)\leq\mathcal{D}(\alpha)\vee\mathcal{D}(\beta).$$

Let $\alpha$ be an open cover of $X$.
For a continuous map $f:X\rightarrow Y$, we say that $f$ is \emph{$\alpha$-compatible} if there is an open cover $\beta$ of $f(X)\subset Y$ such that $f^{-1}(\beta)\succ\alpha$. 
 We say that $(X,\mathfrak{V})$ is \emph{$\alpha$-compatible} if for each $v\in\mathfrak{V}$ there is an open cover $\beta$ of $v(D(v))$ such that $v^{-1}(\beta)\succ\alpha$. Define $$\mathfrak{V}(\alpha)=\{v\in\mathfrak{V};v\text{ is $\alpha$-compatible}\}.$$
By definitions, $(X,\mathfrak{V})$ is $\alpha$-compatible if and only if $\mathfrak{V}(\alpha)=\mathfrak{V}$. 
The next result is  \cite[Proposition 2.3]{LindenstraussWeiss2000}.
 
\begin{lemma}\label{lem:comp} 
If for each $y\in Y$, $f^{-1}(y)\subset U\in\alpha$ for some $U\in\alpha$, then $f$ is $\alpha$-compatible.  
\end{lemma}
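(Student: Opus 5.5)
The plan is to argue by compactness, as in the classical proof of \cite[Proposition 2.3]{LindenstraussWeiss2000}. Here $f:X\to Y$ is continuous, $X$ is compact, and $\alpha$ is a finite open cover of $X$. We may assume $Y=f(X)$, which is compact, so that any cover we produce need only cover $f(X)$. The goal is an open cover $\beta$ of $f(X)$ such that for every $W\in\beta$ the preimage $f^{-1}(W)$ lies in some member of $\alpha$; this is exactly $f^{-1}(\beta)\succ\alpha$.

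First, fix $y\in f(X)$. By hypothesis there is $U_y\in\alpha$ with $f^{-1}(y)\subset U_y$. We claim there is an open neighbourhood $W_y$ of $y$ with $f^{-1}(W_y)\subset U_y$. Since $U_y$ is open, $K_y:=X\setminus U_y$ is closed in the compact space $X$, hence compact. Its image $f(K_y)$ is therefore compact, and in particular closed in $Y$. Since $f^{-1}(y)\subset U_y$, no point of $K_y$ maps to $y$, so $y\notin f(K_y)$. Set $W_y:=Y\setminus f(K_y)$, an open set containing $y$. If $x\in f^{-1}(W_y)$, then $f(x)\notin f(K_y)$, so $x\notin K_y$, that is, $x\in U_y$. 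Hence $f^{-1}(W_y)\subset U_y$. This closed-map argument is the only substantive step, and it is exactly where compactness of $X$ is used.

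Second, the family $\{W_y\}_{y\in f(X)}$ is an open cover of the compact set $f(X)$. Extract a finite subcover $\beta=\{W_{y_1},\dots,W_{y_k}\}$. Then $f^{-1}(\beta)=\{f^{-1}(W_{y_j})\}_{j=1}^k$ is a finite open cover of $X$, and each of its members is contained in $U_{y_j}\in\alpha$. So $f^{-1}(\beta)\succ\alpha$ and $f$ is $\alpha$-compatible.

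If one wants the statement with $\beta$ an open cover of $Y$ rather than of $f(X)$, the same sets work. Each $W_y$ is open in $Y$, and points of $Y\setminus f(X)$ do not matter because their preimages are empty. The only thing to check is that the refinement relation is read on $X$, which is how the paper defines it.
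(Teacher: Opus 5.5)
Your argument is correct and is essentially the standard compactness proof of \cite[Proposition 2.3]{LindenstraussWeiss2000}, which the paper cites without reproving. The set $f(X\setminus U_y)$ is compact and misses $y$, so its complement is a neighbourhood $W_y$ with $f^{-1}(W_y)\subset U_y$, and a finite subcover of $f(X)$ finishes the proof. Two small points: calling $f(X\setminus U_y)$ closed uses that $Y$ is Hausdorff, which holds for all the metric targets in the paper; and in your last remark the $W_{y_j}$ need not cover $Y$, so to get a cover of $Y$ you should add the open set $Y\setminus f(X)$, whose preimage is empty.
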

Suppose $\alpha$ is an open cover of $X$ and $\sigma=\{v_1,v_2,...,v_n,...\}\in\Sigma$, we denote
$$v^{-\sigma(n)}\alpha=\{v_1^{-1}\circ ... \circ v_{n-1}^{-1}\circ v_n^{-1}(U); U\in\alpha\}$$ 
$v^{-\sigma(0)}\alpha=\alpha$ and for $b>a\in\mathbb{Z^+}$ 
$$\alpha_a^b(\sigma)=v^{-\sigma(a)}\alpha\vee v^{-\sigma(a+1)}\alpha\vee ... \vee v^{-\sigma(b)}\alpha.$$

\begin{definition}
	If $(X,\mathfrak{V})$ is an iterated function system, define for each $\sigma\in\Sigma$ 
	$$\mdim(X,\mathfrak{V},\sigma)=\sup_{\alpha\in\mathcal{U}_X}\lim_{n\to+\infty}\frac{\mathcal{D}(\alpha_0^{n-1}(\sigma))}{n},$$
	where $\mathcal{U}_X$ is the set of all finite open covers of $X$, 
	and define the \emph{mean dimension} of $(X,\mathfrak{V})$ as $$\mdim(X,\mathfrak{V})=\sup_{\sigma\in\Sigma}\mdim(X,\mathfrak{V},\sigma).$$
\end{definition}
For each $\sigma\in\Sigma$, the limit in definition of $\mdim(X,\mathfrak{V},\sigma)$ exists by the appendix of \cite{LindenstraussWeiss2000}. As in the classical definition of mean dimension for homeomorphisms, we have that for each $\sigma\in\Sigma$ 
$$\mathcal{D}(\alpha_0^{n-1}(\sigma))\leq \sup_{\alpha\in\mathcal{U}_X}\mathcal{D}(\alpha)=:\dim X.$$ 
Therefore, if the topological dimension of $X$ is finite, then $$\mdim(X,\mathfrak{V})=0.$$

From the definition it is easy to see the following lemma.
\begin{lemma}
	Let $(X,\mathfrak{V})$ be an iterated function system and $\sigma\in\Sigma$. If $\{\alpha_k\}_{k=1}^{+\infty}$ is a sequence of open covers of $X$ such that the maximal diameter of an element of $\alpha_k$ is $\frac{1}{k}$, then
	$$\mdim(X,\mathfrak{V},\sigma)=\sup_{k\in\mathbb{N}}\lim_{n\rightarrow+\infty}\frac{\mathcal{D}(\alpha_0^{n-1}(\sigma,k))}{n},$$  
where $\alpha_0^{n-1}(\sigma,k)=(\alpha_k)_0^{n-1}(\sigma)$.
\end{lemma}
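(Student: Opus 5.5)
The plan is to prove two inequalities, $\lim_n \mathcal{D}((\alpha_k)_0^{n-1}(\sigma))/n \le \mdim(X,\mathfrak{V},\sigma)$ for each $k$ and the reverse bound, using only two elementary facts about $\mathcal{D}$. The first is monotonicity under refinement: if $\gamma \succ \alpha$, then $\mathcal{D}(\alpha) \le \mathcal{D}(\gamma)$, since any cover refining $\gamma$ also refines $\alpha$. The second is that this monotonicity passes through the iterated joins: if $\gamma\succ\alpha$, then $v^{-\sigma(i)}\gamma \succ v^{-\sigma(i)}\alpha$ for every $i$, and hence $\gamma_0^{n-1}(\sigma)\succ \alpha_0^{n-1}(\sigma)$. (Preimages here are taken on the sets where the compositions are defined, exactly as in the definition of $\alpha_0^{n-1}(\sigma)$.)

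\emph{First inequality.} Each $\alpha_k$ should be a finite open cover; if it is not, compactness lets me pass to a finite subcover with the same diameter bound, and the argument below only uses that bound. Then $\alpha_k\in\mathcal{U}_X$, so the supremum over $k$ is at most the supremum over $\mathcal{U}_X$, which is $\mdim(X,\mathfrak{V},\sigma)$.

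\emph{Reverse inequality.} Fix $\alpha\in\mathcal{U}_X$ and let $\delta>0$ be a Lebesgue number of $\alpha$. Choose $k$ with $1/k<\delta$. Every element of $\alpha_k$ has diameter at most $1/k<\delta$, so it lies inside some element of $\alpha$; that is, $\alpha_k\succ\alpha$. By the second fact, $(\alpha_k)_0^{n-1}(\sigma)\succ\alpha_0^{n-1}(\sigma)$, and by the first fact
$$\mathcal{D}(\alpha_0^{n-1}(\sigma))\le\mathcal{D}((\alpha_k)_0^{n-1}(\sigma)).$$
Dividing by $n$ and letting $n\to\infty$ (both limits exist by the cited appendix of \cite{LindenstraussWeiss2000}) bounds the $\alpha$-term by the $k$-term, and hence by the supremum over $k$. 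Taking the supremum over $\alpha\in\mathcal{U}_X$ gives the reverse inequality.

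The only delicate step is checking that refinement is preserved under $v^{-\sigma(i)}$, because the maps are partially defined. The preimage of a single set $U$ is $v_1^{-1}\circ\cdots\circ v_i^{-1}(U)$, and $U\subset V$ implies that this preimage is contained in the corresponding preimage of $V$. Since refinement is checked elementwise, the relation survives joins, and the domain issues cause no trouble.
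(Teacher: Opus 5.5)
Your proposal is correct and follows the paper's proof. The easy inequality comes from $\alpha_k\in\mathcal{U}_X$, and the reverse one from $\alpha_k\succ\alpha$ for large $k$ together with monotonicity of $\mathcal{D}$ under refinement of the joins. You fill in details the paper leaves implicit: the Lebesgue number argument, and why refinement survives the partially defined preimages and the joins.
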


\begin{proof} It is clear that 
$$\mdim(X,\mathfrak{V},\sigma)\geq\sup_{k\in\mathbb{N}}\lim_{n\rightarrow+\infty}\frac{\mathcal{D}(\alpha_0^{n-1}(\sigma,k))}{n}.$$  
On the other hand, notice that if $\alpha\in\mathcal{U}_X$ then there is $k\in\mathbb{N}$ such that 
$$\alpha_k\succ\alpha.$$
Therefore,
$$\mathcal{D}(\alpha_0^{n-1}(\sigma,k))\geq \mathcal{D}(\alpha_0^{n-1}(\sigma)).$$ 
Hence, 
$$\mdim(X,\mathfrak{V},\sigma)\leq\sup_{k\in\mathbb{N}}\lim_{n\rightarrow+\infty}\frac{\mathcal{D}(\alpha_0^{n-1}(\sigma,k))}{n}.$$
\end{proof}

For $v\in\mathfrak V$ define $\sigma_v=(v,v,v,...,v,...)$. The following result is a version of  \cite[Proposition 2.7]{LindenstraussWeiss2000}.

\begin{proposition}\label{p1}
	Let $(X,\mathfrak{V})$ be an iterated function system and 
	$v\in\mathfrak V$. If for some $n\in\mathbb{N}$ we have that 
	$v^n\in\mathfrak V$, then
	$$\mdim(X,\mathfrak{V},\sigma_{v^n})=n\cdot \mdim(X,\mathfrak{V},\sigma_v).$$
\end{proposition}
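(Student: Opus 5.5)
We follow the classical argument of \cite[Proposition 2.7]{LindenstraussWeiss2000}. Write $w=v^n$, so that $\sigma_w=(w,w,\dots)$. Then $w^{-k}\alpha=v^{-nk}\alpha$ is exactly $v^{-\sigma_v(nk)}\alpha$, and for any finite open cover $\alpha$ of $X$ we have
$$\alpha_0^{k-1}(\sigma_w)=\alpha\vee v^{-n}\alpha\vee\dots\vee v^{-n(k-1)}\alpha .$$
All preimages are taken on the domains where the compositions are defined, exactly as in the definition of $v^{-\sigma(n)}\alpha$. The two inequalities will be proved separately.

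\emph{Inequality $\mdim(X,\mathfrak V,\sigma_{w})\geq n\,\mdim(X,\mathfrak V,\sigma_v)$.} Fix $\alpha\in\mathcal U_X$ and put $\beta=\alpha_0^{n-1}(\sigma_v)=\alpha\vee v^{-1}\alpha\vee\dots\vee v^{-(n-1)}\alpha$, which is again a finite open cover of $X$. Regrouping the join gives
$$\beta_0^{k-1}(\sigma_w)=\bigvee_{j=0}^{k-1}v^{-nj}\Big(\bigvee_{i=0}^{n-1}v^{-i}\alpha\Big)=\alpha_0^{nk-1}(\sigma_v).$$
Therefore
$$\frac{\mathcal D(\beta_0^{k-1}(\sigma_w))}{k}=n\cdot\frac{\mathcal D(\alpha_0^{nk-1}(\sigma_v))}{nk}.$$
The limit defining $\mdim(X,\mathfrak V,\sigma_v)$ exists, so it may be computed along the subsequence $nk$. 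Letting $k\to\infty$ and taking the supremum over $\alpha$ gives the inequality.

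\emph{Inequality $\mdim(X,\mathfrak V,\sigma_{w})\leq n\,\mdim(X,\mathfrak V,\sigma_v)$.} Fix $\alpha\in\mathcal U_X$. Since $\alpha_0^{k-1}(\sigma_w)$ is refined by $\alpha_0^{n(k-1)}(\sigma_v)$, and $\mathcal D$ is monotone under refinement (a refinement of the finer cover is also a refinement of the coarser one), we get
$$\mathcal D(\alpha_0^{k-1}(\sigma_w))\leq \mathcal D(\alpha_0^{n(k-1)}(\sigma_v)).$$
Dividing by $k$ and letting $k\to\infty$ gives
$$\lim_k \frac{\mathcal D(\alpha_0^{k-1}(\sigma_w))}{k}\le n\lim_m\frac{\mathcal D(\alpha_0^{m-1}(\sigma_v))}{m}\le n\,\mdim(X,\mathfrak V,\sigma_v).$$
Taking the supremum over $\alpha$ gives the inequality.

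The main obstacle is domain bookkeeping. We must check that $w^{-j}$ and $v^{-nj}$ agree as set maps on the covers, and that the restricted covers are still open covers of the relevant sets, given that $D(v)$ need not be all of $X$. This follows once we observe that $w=v^n\in\mathfrak V$ has domain $D(w)=\{x: v^n(x)\text{ defined}\}$, so that both sides index the same pieces. We also need that $\mathcal D$ is monotone and that the joins in the regrouping identity are taken over the same underlying set, which we will verify directly from the definitions.
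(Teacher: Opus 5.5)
Your argument is correct and is essentially the paper's proof, which is itself the Lindenstrauss--Weiss proof of their Proposition 2.7. The upper bound uses that $\alpha_0^{k-1}(\sigma_{v^n})$ is refined by a join of length about $nk$ along $\sigma_v$, together with monotonicity of $\mathcal D$; the lower bound uses the regrouping $\alpha_0^{nk-1}(\sigma_v)=\bigvee_{j=0}^{k-1}v^{-nj}\alpha_0^{n-1}(\sigma_v)$ along the subsequence $nk$. The one detail that both you and the paper gloss over is that $\alpha_0^{n-1}(\sigma_v)$ is a finite open cover of $X$ only when $D(v)=X$: in general you extend its members to open subsets of $X$ and adjoin the complement of the domain of $v^{n-1}$, which leaves the join along $\sigma_{v^n}$ unchanged on the domain of $v^{nk-1}$ and so can only increase $\mathcal D$, and your lower bound survives.
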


\begin{proof} If $\alpha$ is an open cover of $X$, we have that
\begin{eqnarray*}
	&&\lim_{k\rightarrow +\infty}\frac{\mathcal{D}(v^{-\sigma_{v^n}(0)}\alpha\vee v^{-\sigma_{v^n}(1)}\alpha\vee ... \vee v^{-\sigma_{v^n}(k-1)}\alpha)}{k}\\
	&=& \lim_{k\rightarrow +\infty}\frac{\mathcal{D}(\alpha\vee v^{-n}\alpha\vee ... \vee v^{-(k-1)n}\alpha)}{k}\\&\leq& 
	n\cdot \lim_{k\rightarrow\infty}\frac{\mathcal{D}(\alpha_0^{kn-1}(\sigma_v))}{kn}\leq n\cdot \mdim(X,\mathfrak{V},\sigma_v).  
\end{eqnarray*}
Therefore,
$$\mdim(X,\mathfrak{V},\sigma_{v^n})\leq n\cdot \mdim(X,\mathfrak{V},\sigma_v).$$
Since,
$$\alpha_0^{kn-1}(\sigma_v)= \alpha_0^{n-1}\vee v^{-n}\alpha_0^{n-1}\vee ...\vee v^{-(k-1)n}\alpha_0^{n-1},$$
we have that
$$\mdim(X,\mathfrak{V},\sigma_{v})=\sup_{\alpha\in\mathcal{U}_X}\lim_{k\rightarrow\infty}\frac{\mathcal{D}(\alpha_0^{kn-1}(\sigma_v))}{kn}\leq \frac{\mdim(X,\mathfrak{V},\sigma_{v^n})}{n}.$$
Hence,
$$\mdim(X,\mathfrak{V},\sigma_{v^n})=n\cdot \mdim(X,\mathfrak{V},\sigma_v).$$
\end{proof}

\begin{corollary}\label{c1}
	Let $(X,\mathfrak{V})$ be an iterated function system and $v^n\in\mathfrak{V}$ for each $n\in\mathbb{N}$, such that $\mdim(X,\mathfrak{V},\sigma_v)>0$. Then $\mdim(X,\mathfrak{V})=+\infty$.
\end{corollary}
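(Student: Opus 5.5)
The plan is to apply Proposition~\ref{p1} to each power of $v$ and then pass to the supremum in the definition of $\mdim(X,\mathfrak{V})$. By hypothesis $v^n\in\mathfrak{V}$ for every $n\in\mathbb{N}$, so for each $n$ the sequence $\sigma_{v^n}=(v^n,v^n,\dots)$ is an element of $\Sigma$. Since $v=v^1\in\mathfrak{V}$ as well, Proposition~\ref{p1} applies for every $n$ and gives
$$\mdim(X,\mathfrak{V},\sigma_{v^n})=n\cdot\mdim(X,\mathfrak{V},\sigma_v).$$

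By definition, $\mdim(X,\mathfrak{V})=\sup_{\sigma\in\Sigma}\mdim(X,\mathfrak{V},\sigma)$. Taking $\sigma=\sigma_{v^n}$ yields
$$\mdim(X,\mathfrak{V})\geq n\cdot\mdim(X,\mathfrak{V},\sigma_v)$$
for all $n\in\mathbb{N}$. Write $c=\mdim(X,\mathfrak{V},\sigma_v)$.

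We split into two cases. If $c=+\infty$, the conclusion is immediate. If $0<c<+\infty$, then $nc\to+\infty$ as $n\to\infty$, so $\mdim(X,\mathfrak{V})=+\infty$.

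The only real obstacle is whether Proposition~\ref{p1} is valid when $c=+\infty$, since then the identity $\mdim(X,\mathfrak{V},\sigma_{v^n})=n\cdot c$ involves infinite values. This does not affect the argument: already $\mdim(X,\mathfrak{V},\sigma_v)=+\infty$ is one of the terms in the supremum defining $\mdim(X,\mathfrak{V})$. Beyond this, the proof is a direct application of Proposition~\ref{p1}, and nothing else needs to be checked.
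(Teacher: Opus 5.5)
Your proposal is correct and follows the paper's proof: apply Proposition~\ref{p1} to each $\sigma_{v^n}$, then use that $\mdim(X,\mathfrak{V})$ is the supremum over $\Sigma$. Your separate treatment of the case $\mdim(X,\mathfrak{V},\sigma_v)=+\infty$ is a harmless addition that the paper absorbs into the convention $n\cdot(+\infty)=+\infty$.
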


\begin{proof}
By Proposition \ref{p1}, $$\mdim(X,\mathfrak{V},\sigma_{v^n})=n\cdot \mdim(X,\mathfrak{V},\sigma_v)\rightarrow+\infty, \text{ } n\rightarrow+\infty.$$
And since $$\mdim(X,\mathfrak{V})=\sup_{\sigma\in\Sigma}\mdim(X,\mathfrak{V},\sigma),$$ we have that $\mdim(X,\mathfrak{V})=+\infty$.
\end{proof}

\begin{example}
\begin{enumerate}
	\item If $s$ is the shift on $([0,1]^d)^\mathbb{Z}$, by Proposition 3.3 of \cite{LindenstraussWeiss2000}, $\mdim(([0,1]^d)^\mathbb{Z},s)=d$. Defining $\mathfrak{V}=\{s^n;n\in\mathbb{N}\}$, by Corollary \ref{c1}, 
$$\mdim(([0,1]^d)^\mathbb{Z},\mathfrak{V})=+\infty.$$
	\item Let $X=D(v)=\mathbb{T}^2\cup ([0,1]^d)^\mathbb{Z}$ and $v$ be the Linear Anosov $A$ on $\mathbb{T}^2$ and the shift $s$ on $([0,1]^d)^\mathbb{Z}$. Since $\mdim(\mathbb{T}^2,A)=0$ and $\mdim(([0,1]^d)^\mathbb{Z},s)=d$, we have that $\mdim(X,v)=d$. If $D(w)=\mathbb{T}^2$, $w$ is the identity on $\mathbb{T}^2$ and $v,w\in\mathfrak{V}$, then  $$\mdim(X,\mathfrak{V},\sigma_v)=d \ \text{ and } \mdim(X,\mathfrak{V},\sigma)=0,$$
	where $\sigma=(w,v,v,...,v,...)$.   
\end{enumerate}
\end{example}

\section{Proof of Theorem \ref{T1}}\label{sec:t1}
We are now in a position to prove Theorem \ref{T1}, namely, that for every iterated function system $(X,\mathfrak{V})$,
\[
\mdim(X,\mathfrak{V})\leq \lmdim(X,\mathfrak{V})\leq \umdim(X,\mathfrak{V}).
\]

Before proceeding, let us emphasize the meaning of this statement. Since metric mean dimension depends on the choice of the metric $d$ on $X$, the theorem shows that the mean dimension is always bounded above by the lower metric mean dimension, for \emph{every} metric on $X$ compatible with the topology. In this sense, Theorem \ref{T1} establishes mean dimension as a genuinely topological lower bound for metric mean dimension in the setting of generalized iterated function systems.

The proof relies on the introduction of an intermediate quantity, which we call the \emph{orbit metric mean dimension}. This notion plays a key role in connecting the topological and metric viewpoints, and allows us to place it naturally between mean dimension and metric mean dimension.

We call a set $S\subset \Psi(X)$ an \emph{$(\sigma,n,\epsilon)$-separated} set, if for all pairs $(x,\sigma), (y,\sigma)\in S$ there is some   $i\in\{0, 1,...,n-1\}$ satisfying
\begin{equation*} d(v^{\sigma(i)}(x),v^{\sigma(i)}(y))\geq \epsilon.
\end{equation*}
We denote the maximum cardinality of an $(\sigma,n,\epsilon)$-separated set by $s(\sigma,n,\epsilon)$.
Then define the \emph{metric mean dimension of $(X,\mathfrak{V})$ in $\sigma$} by
\begin{eqnarray*}
	\omdim(X,\mathfrak{V},\sigma)&:=&
	\liminf_{\epsilon\rightarrow 0}\frac{1}{|\log(\epsilon)|}\limsup_{n\rightarrow\infty}\frac{1}{n}\log  s(\sigma,n,\epsilon)
\end{eqnarray*}
and the \emph{orbit metric mean dimension of $(X,\mathfrak{V})$} by
\begin{eqnarray*}
	\omdim(X,\mathfrak{V})&:=&\sup_{\sigma\in\Sigma}\omdim(X,\mathfrak{V},\sigma).
\end{eqnarray*}

\begin{lemma}\label{l2}
	Let $(X,\mathfrak{V})$ be an iterated function system. Then 
	$$\omdim(X,\mathfrak{V})\leq \lmdim(X,\mathfrak{V}).$$
\end{lemma}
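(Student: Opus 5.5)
The plan is a direct comparison of counting functions for each fixed $\sigma$, followed by passing to the limits in the definitions. Fix $\sigma\in\Sigma$. If $\Sigma_\sigma=\emptyset$, then no pair $(x,\sigma)$ lies in $\Psi(X)$, so $s(\sigma,n,\epsilon)\le 1$ and $\omdim(X,\mathfrak{V},\sigma)=0\le\lmdim(X,\mathfrak{V})$. So assume $\Sigma_\sigma\neq\emptyset$.

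The key step is the inequality
$$
s(\sigma,n,2\epsilon)\le r(n,\epsilon)\qquad\text{for all } n\in\mathbb{N},\ \epsilon>0.
$$
Let $S$ be a $(\sigma,n,2\epsilon)$-separated set and $R$ an $(n,\epsilon)$-spanning set of minimal cardinality; there is nothing to prove if $r(n,\epsilon)=\infty$. For each $(x,\sigma)\in S$, choose $(y,\varphi)\in R$ with $d(v^{\sigma(i)}(x),v^{\varphi(i)}(y))<\epsilon$ for $0\le i\le n-1$. This assignment is injective. If two points $(x,\sigma),(x',\sigma)$ were sent to the same $(y,\varphi)$, the triangle inequality would give $d(v^{\sigma(i)}(x),v^{\sigma(i)}(x'))<2\epsilon$ for every $i\le n-1$, contradicting $2\epsilon$-separation. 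The argument only needs the spanning set to contain arbitrary pairs $(y,\varphi)$, not necessarily with $\varphi=\sigma$. This is exactly why the orbit version, which fixes $\sigma$, is controlled by the global $r(n,\epsilon)$.

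Next I pass to the limits. Taking $\frac1n\log$ and $\limsup_{n\to\infty}$ gives
$$
\limsup_{n}\tfrac1n\log s(\sigma,n,2\epsilon)\le \limsup_n\tfrac1n\log r(n,\epsilon).
$$
Now divide by $|\log(2\epsilon)|$ and use that $|\log\epsilon|/|\log 2\epsilon|\to1$ as $\epsilon\to0$. This yields
$$
\frac{1}{|\log 2\epsilon|}\limsup_n\tfrac1n\log s(\sigma,n,2\epsilon)\le \frac{|\log\epsilon|}{|\log 2\epsilon|}\cdot\frac{1}{|\log\epsilon|}\limsup_n\tfrac1n\log r(n,\epsilon).
$$
Take $\liminf_{\epsilon\to0}$ on both sides. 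On the left, $\liminf$ over $2\epsilon\to0$ is the same as over $\epsilon\to0$, so it equals $\omdim(X,\mathfrak{V},\sigma)$. On the right, the factor tending to $1$ does not change the $\liminf$, provided we handle the case where the quantity is $+\infty$, which is trivial. This gives $\omdim(X,\mathfrak{V},\sigma)\le\lmdim(X,\mathfrak{V})$. Taking the supremum over $\sigma$ finishes the proof.

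The only delicate point is the limit bookkeeping: moving from the scale $2\epsilon$ to $\epsilon$ inside a $\liminf$, and allowing infinite values. Both are routine once the counting inequality is established. Alternatively, one could avoid the factor $2$ by noting that any $(\sigma,n,\epsilon)$-separated set is $(n,\epsilon)$-separated, so $s(\sigma,n,\epsilon)\le s(n,\epsilon)$. That route, however, would require relating $s$ to $r$ at the level of the mean dimension normalization, which again costs a factor of $2$ in $\epsilon$. So the direct argument above is the one I will use.
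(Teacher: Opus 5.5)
Your proof is correct and follows essentially the same route as the paper, whose whole argument is the observation that every $(\sigma,n,\epsilon)$-separated set is $(n,\epsilon)$-separated, so $s(\sigma,n,\epsilon)\le s(n,\epsilon)$, followed by passing to limits and taking the supremum over $\sigma$. Your injection into a minimal $(n,\epsilon)$-spanning set, giving $s(\sigma,n,2\epsilon)\le r(n,\epsilon)$, and the rescaling $|\log\epsilon|/|\log 2\epsilon|\to 1$ inside the $\liminf$ make explicit the separated-versus-spanning comparison that the paper leaves implicit, since $\lmdim(X,\mathfrak{V})$ is defined through $r(n,\epsilon)$ rather than $s(n,\epsilon)$.
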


\begin{proof}
Fix $\sigma\in\Sigma$. Note that each   $(\sigma,n,\epsilon)$-separated set also is a $(n,\epsilon)$-separated set. Therefore,
$$\omdim(\sigma,X,\mathfrak{V})\leq \lmdim(X,\mathfrak{V}).$$
Hence,
$$\omdim(X,\mathfrak{V})=\sup_{\sigma\in\Sigma} \omdim(\sigma,X,\mathfrak{V})\leq \lmdim(X,\mathfrak{V}).$$
\end{proof}

\begin{lemma}\label{131}
	Let $(X,\mathfrak{V})$ be an iterated function system and $\sigma\in\Sigma$. Then 
	$$\mdim(X,\mathfrak{V},\sigma)\leq \omdim(X,\mathfrak{V},\sigma).$$
\end{lemma}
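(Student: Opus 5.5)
We adapt the Lindenstrauss--Weiss argument that $\mdim\le\lmdim$ for a single homeomorphism (their Theorem 4.2) to the fixed sequence $\sigma=(v_1,v_2,\dots)$. We may assume $\Sigma_\sigma\neq\emptyset$, and we work on the closed set of points whose $\sigma$-orbit is defined. If this set is empty, the joins $\alpha_0^{n-1}(\sigma)$ live on an empty set and both sides vanish.

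Fix a finite open cover $\alpha$ of $X$. By the preceding lemma it suffices to bound $\lim_n \mathcal{D}(\alpha_0^{n-1}(\sigma))/n$ for covers of small mesh. Let $L>0$ be a Lebesgue number of $\alpha$, and let $\epsilon<L/4$ be small.

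The key geometric step is the following. Choose a maximal $(\sigma,n,\epsilon)$-separated set $S$ with $|S|=s(\sigma,n,\epsilon)$. It is $(\sigma,n,\epsilon)$-spanning in the sense that every $x\in\Sigma_\sigma$ satisfies $d(v^{\sigma(i)}(x),v^{\sigma(i)}(y))<\epsilon$ for all $0\le i\le n-1$ and some $y\in S$.

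Following Lindenstrauss--Weiss, we build a continuous map $F_n$ from the domain into $[0,1]^{nK}$. Here $K$ is the number of bump functions in a partition of unity subordinate to a cover of $X$ at scale $\epsilon$. The coordinates are $\phi_j(v^{\sigma(i)}(x))$. We then use the counting/dimension lemma: if a map is $\alpha_0^{n-1}(\sigma)$-compatible and its image is covered by few cubes, the dimension is controlled.

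More concretely, the standard route takes the functions $f_i(x)=\max(0,1-d(v^{\sigma(i)}(x),A_i)/\epsilon)$-type coordinates and maps $x\mapsto (d(v^{\sigma(i)}(x),\cdot))$ quantized. The cleaner version is Lindenstrauss--Weiss Lemma 3.2/4.3. There, $\mathcal{D}(\beta)\le$ the minimal $k$ such that a compatible map into a $k$-dimensional polyhedron exists. Combined with a counting argument, this gives
\[
\mathcal{D}(\alpha_0^{n-1}(\sigma))\le \frac{\log s(\sigma,n,\epsilon)}{|\log \epsilon|}\,(1+o(1))+C n\,\frac{1}{|\log\epsilon|}.
\]
The counting argument uses the fact that the image of $\Sigma_\sigma$ in the cube $[0,1]^{n}$ (with one bump per time step via the $\alpha$-structure) meets at most $s(\sigma,n,\epsilon)$ boxes of side $\epsilon$. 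A generic affine perturbation then shows that a set meeting $N$ cubes of side $\epsilon$ in $[0,1]^n$ can be pushed by an $\alpha$-compatible map into a polyhedron of dimension at most $\log N/|\log\epsilon|+O(n/|\log \epsilon|)$.

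Compatibility holds because points in the same fibre have all their $\sigma$-iterates within $2\epsilon<L$, so they lie in a common element of $\alpha_0^{n-1}(\sigma)$ (Lemma \ref{lem:comp}).

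Finally we divide by $n$ and let $n\to\infty$. This gives
\[
\lim_n\frac{\mathcal{D}(\alpha_0^{n-1}(\sigma))}{n}\le \frac{1}{|\log\epsilon|}\limsup_n\frac1n\log s(\sigma,n,\epsilon)+\frac{C}{|\log\epsilon|}.
\]
We take $\liminf_{\epsilon\to0}$, then the supremum over $\alpha$, which yields the lemma.

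The main obstacle is the step bounding the dimension of the image. It requires the linear-algebra/general-position estimate: a set in $[0,1]^n$ meeting few $\epsilon$-boxes is mapped by a small perturbation avoiding high-dimensional skeleta. We also need to ensure that this perturbation preserves $\alpha_0^{n-1}(\sigma)$-compatibility, with constants independent of $n$. We would import this step essentially verbatim from Lindenstrauss--Weiss, since it depends only on the finite sequence of maps $v_1,\dots,v_{n-1}$ and not on their being iterates of one map.
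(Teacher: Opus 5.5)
Your overall route is the paper's, namely Lindenstrauss--Weiss adapted along the fixed $\sigma$: orbit coordinates in a cube, and a count by Bowen balls controlled by $s(\sigma,n,\delta)$. There is a genuine gap, though. The step you defer to them is the entire content of the proof, and the justification you do give for it fails. You argue compatibility from ``points in the same fibre have all their $\sigma$-iterates within $2\epsilon$''. That could only hold for a map that is essentially injective at scale $\epsilon$. The map you need lands in a polyhedron of dimension about $DN$. In the paper it is obtained by retracting $[0,1]^{rN}$ onto a skeleton through radial projections from a random interior point $\xi$. Such retractions identify points of the cube that are far apart in the sup norm. Since $|w_i(a)-w_i(b)|\le C\,d(a,b)$, the corresponding orbit points are then far apart too, not within $O(\epsilon)$. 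So fibres are not $2\epsilon$-small; they lie only in single elements of $\alpha_0^{N-1}(\sigma)$, whose size is the mesh of $\alpha$, whereas the counting scale $\delta$ must be chosen after $\alpha$, depending on $r$ and $C$. Likewise, a ``generic small perturbation'' cannot push a set meeting many $\epsilon$-boxes into a low-dimensional skeleton; the reduction has to move points a long way.

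The missing idea is how compatibility survives this collapse. The paper first refines $\alpha$ to $\{U_1,V_1\}\vee\cdots\vee\{U_r,V_r\}$. It then uses the specific coordinates $w_i$ with $U_i=w_i^{-1}[0,1)$ and $V_i=w_i^{-1}(0,1]$, so membership in $U_i$ or $V_i$ is read off from whether a coordinate is $1$ or $0$. Lemma \ref{4.4} shows that any $\pi$ which never moves a coordinate off $0$ or off $1$ makes $\pi\circ F_\sigma(N,\cdot)$ compatible with $\alpha_0^{N-1}(\sigma)$, via Lemma \ref{lem:comp}. The face-by-face radial retractions have exactly this property. Lemma \ref{4.3} (a random $\xi$ avoids every projection $F_\sigma(N,\Sigma_\sigma)_S$ with $|S|\ge (D+\epsilon)N$) is what lets the retraction be carried down to $J_{m_0}$. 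With a generic partition of unity at scale $\epsilon$, as you propose, nothing guarantees that the pushed map is compatible.

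Two minor points. First, one coordinate per time step cannot encode a general $\alpha$; you need $r$ per step. Second, the image meets up to $(2C+2)^{rN}s(\sigma,N,\epsilon)$ boxes of side $\epsilon$, not at most $s(\sigma,N,\epsilon)$. That costs only $O(N/|\log\epsilon|)$, which your error term absorbs.
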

\begin{proof}
By definition of $\mdim(X,\mathfrak{V},\sigma)$ it is enough to show that $$\lim_{n\to\infty}\frac{\mathcal{D}(\alpha_0^{n-1}(\sigma))}{n}\leq \omdim(X,\mathfrak{V},\sigma),$$
for every $\alpha\in\mathcal{U}_X$.

As in \cite{LindenstraussWeiss2000}, we can 
refine $\alpha$ to the form of
$$\alpha=\{U_1,V_1\}\vee\{U_2,V_2\}\vee...\vee\{U_r,V_r\},$$
where each $\{U_i, V_i\}$ is an open cover of $X$.
Define $w_i:\Sigma_\sigma\rightarrow [0,1]$ by
$$w_i(x)=\frac{d(x,X-V_i)}{d(x,X-V_i)+d(x,X-U_i)}. $$
Then $w_i$ is Lipschitz, $U_i=w_i^{-1}[0,1)$ and $V_i=w_i^{-1}(0,1]$.
Let $$C=\max_{1\le i\le r}\{|L_i|\in\mathbb{R}; L_i \text{ is  Lipschitz constant of } w_i\}.$$
And for each natural number $N$, define $$F_\sigma(N,\cdot):\Sigma_\sigma\rightarrow [0,1]^{rN}$$ by
\begin{multline*}
F_\sigma(N,x)=(w_1(x),...,w_r(x),w_1(v^{\sigma(1)}x),...,w_r(v^{\sigma(1)}x),...,\\w_1(v^{\sigma(N)}x),...,w_r(v^{\sigma(N)}x)).\end{multline*}
Then 
$F_\sigma(N,\cdot)$ is $ \alpha_0^{N-1}(\sigma)$-compatible. 
The proofs of the next two lemmas follows the same lines of  \cite[Lemmas 4.3 and 4.4]{LindenstraussWeiss2000}.

For any $S\subset\{1,...,rN\}$ and $z\in [0,1]^{rN}$  let $z_S\in [0,1]^{|S|}$ be the projection to the coordinates in $S$. 
For $\sigma\in\Sigma$, $n\in\mathbb{N}$ and $x,y\in\Sigma_\sigma$ define
$$d^\sigma_n(x,y)=\max\{d(x,y),d(v^{\sigma(1)}(x),v^{\sigma(1)}(y)),...,d(v^{\sigma(n)}(x),v^{\sigma(n)}(y))\}.$$

\begin{lemma}\label{4.3}
	Let $\epsilon>0$ be given. Then there is a natural number $N_\epsilon$ such that if $N>N_\epsilon$, then there is $\xi\in (0,1)^{rN}$ such that if $|S|\geq (D+\epsilon)N$, $$\xi_S\notin F_\sigma(N,\Sigma_\sigma)_S,$$  
	where $D=\omdim(X,\mathfrak{V},\sigma)$.
\end{lemma}

\begin{proof}
Let $\delta>0$ satisfy that $$\delta<(2^r(2C)^{2D})^{-2/\epsilon} \ \text{ and } \ \frac{\limsup_{n\rightarrow 0}\frac{1}{n}\log s(\sigma,n,\delta)}{|\log\delta|}\leq 
D+\frac{\epsilon}{4}.$$
If $N$ is large, we can cover the closure of $\Sigma_\sigma$ by $\delta^{-(D+\frac{\epsilon}{2})N}$ dynamical balls $$B_\sigma(x,N,\delta)=\{y\in\Sigma_\sigma;d_N^\sigma(x,y)<\delta\}.$$ Therefore, if we define $||(a_1,...,a_{rN})-(b_1,...,b_{rN})||
=\sup_i|a_i-b_i|$ then $$F_\sigma(N,B_\sigma(x,N,\delta))\subset\{a\in[0,1]^{rN};||F_\sigma(N,x)-a||<C\delta\}.$$
Hence $F_\sigma(N,\Sigma_\sigma)$ can be covered by $\delta^{-(D+\frac{\epsilon}{2})N}$ balls of radius $C\delta$ in the $||.||$ norm. 
 We can name these balls $B(1),...,B(K)$, where $K=\delta^{-(D+\frac{\epsilon}{2})N}$. 
Choose $\xi\in [0,1]^{rN}$ with uniform probability and we have that
$$\mathbb{P}(\xi_S\in F_\sigma(N,\Sigma_\sigma)_S)\leq \sum_{j=1}^K\mathbb{P}(\xi_S\in B(j)_S)\leq \delta^{-(D+\frac{\epsilon}{2})N}(2C\delta)^{|S|}.$$
Therefore,
\begin{eqnarray*}
	&&\mathbb{P}(\exists S;|S|\ge(D+\epsilon)N
\text{ and } \xi_S\in F_\sigma(N,\Sigma_\sigma)_S)  \\&\leq & \sum_{|S|\ge(D+\epsilon)N} \mathbb{P}(\xi_S\in F_\sigma(N,\Sigma_\sigma)_S) \\ &\leq&\sum_{|S|\ge(D+\epsilon)N} \delta^{-(D+\frac{\epsilon}{2})N}(2C\delta)^{(D+\epsilon)N} \leq 2^{rN}((2C)^{2D}\delta^{\frac{\epsilon}{2}})^N\ll 1.
\end{eqnarray*}
Then, 
with positive probability, a random $\xi$ satisfies the lemma.
 \end{proof}

\begin{lemma}\label{4.4}
	If $\pi:F_\sigma(N,\Sigma_\sigma)\rightarrow [0,1]^{rN}$ satisfies that for all $\xi\in [0,1]^{rN}$
	$$ \{1\leq k\leq rN;\xi_k=a\}\subset\{1\leq k\leq rN;\pi(\xi)|_k=a\},$$ for $a\in \{0,1\}$, then $\pi\circ F_\sigma(N,\cdot)$ is compatible with $\alpha_0^{N-1}(\sigma)$. 
\end{lemma}

\begin{proof}
For each $\xi\in [0,1]^{rN}$, $0\leq j<N$ and $1\leq i<r$ define $$W_{i,j}=\begin{cases}
	v^{-\sigma(j)}(U_i), \ \text{if} \ \xi_{jr+i}=0, \\
	v^{-\sigma(j)}(V_i), \ \text{otherwise}.
\end{cases} $$  		
Hence, $$(\pi \circ F_\sigma(N,\cdot))^{-1}(\xi)\subset \bigcap_{i,j}W_{i,j}\in \alpha_0^{N-1}(\sigma),  $$
by definition. Therefore, $\pi\circ F_\sigma(N,\cdot)$ is $\alpha_0^{N-1}(\sigma)$-compatible by Lemma \ref{lem:comp}.
\end{proof}

Now come back to the demonstration of Lemma \ref{131}.
Fix $\epsilon>0$ and let $\xi^1$ and $N$ be given by Lemma \ref{4.3}. Define
$$\Phi=\{\xi\in[0,1]^{rN};\xi_k=\xi^1_k \text{ for more than } (D+\epsilon)N \text{ indices } k\}, $$ 
then $F_\sigma(N, \Sigma_\sigma)\subset \Phi^c:=[0,1]^{rN}\setminus\Phi$.
And for $m\in\mathbb{N}$, define $$J_m:=\{\xi\in [0,1]^{rN}; \xi_i\in\{0,1\} \text{ for at least } m \text{ indices } 1\leq i\leq rN\}.$$
Now we 
define a function $\pi$ satisfying the hypothesis of Lemma \ref{4.4}. First, we define $$\pi_1:[0,1]^{rN}\setminus\{\xi^1\}\rightarrow J_1,$$ by mapping each $\xi$ to the intersection of the ray originates from $\xi^1$ and moving through $\xi$ and $J_1$ (this is possible since the interior of $[0,1]^{rN}$ contains $\xi^1$). 
For each of the $(rN-1)$-dimensional cubes $I$ that comprise $J_1$,
a retraction on 
$I$ can be defined similarly, with the projection of $\xi^1$ onto 
$I$ serving as the center. This will establish a continuous retraction, denoted by $\pi_2$, from $\Phi^c$ into $J_2$. As long as there is an intersection between $\Phi$ and the cubes in $J_m$, this process can be iterated, resulting in the desired continuous projection $\pi$ of $\Phi^c$ onto $J_{m_0}$, with $$m_0+[(D+\epsilon)N]+1= rN.$$
By Lemma \ref{4.4}, $\pi\circ F_\sigma(N,\cdot)\succ\alpha^{N-1}_0(\sigma).$ And since $F_\sigma(N,\Sigma_\sigma)\subset\Phi^c$, we have that $$\pi\circ F_\sigma(N, \Sigma_\sigma)\subset J_{m_0}. $$  
As the topological dimension of $J_{m_0}$ is $[(D+\epsilon)N]+1$,  then by \cite[Proposition 2.4]{LindenstraussWeiss2000} $\mathcal{D}(\alpha^{N-1}_0(\sigma))\leq (D+\epsilon)N+1.$
Hence 
$$\mdim(X,\mathfrak{V},\sigma)\leq D=\omdim(X,\mathfrak{V},\sigma).$$
\end{proof}

\begin{lemma}\label{l3}
	Let $(X,\mathfrak{V})$ be an iterated function system. Then 
	$$\mdim(X,\mathfrak{V})\leq \omdim(X,\mathfrak{V}).$$
\end{lemma}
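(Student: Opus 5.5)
This is a direct consequence of Lemma \ref{131}, obtained by taking suprema over $\sigma\in\Sigma$ on both sides. The plan is as follows.

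First, fix an arbitrary $\sigma\in\Sigma$. Lemma \ref{131} gives $\mdim(X,\mathfrak{V},\sigma)\leq \omdim(X,\mathfrak{V},\sigma)$. By the definition of the orbit metric mean dimension as a supremum over $\Sigma$, we also have $\omdim(X,\mathfrak{V},\sigma)\leq \omdim(X,\mathfrak{V})$. Chaining these gives
$$\mdim(X,\mathfrak{V},\sigma)\leq \omdim(X,\mathfrak{V})\quad\text{for every }\sigma\in\Sigma.$$

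Second, the right-hand side does not depend on $\sigma$. Taking the supremum over $\sigma\in\Sigma$ on the left and using the definition $\mdim(X,\mathfrak{V})=\sup_{\sigma\in\Sigma}\mdim(X,\mathfrak{V},\sigma)$ yields $\mdim(X,\mathfrak{V})\leq \omdim(X,\mathfrak{V})$. This argument also covers the case where the right-hand side is $+\infty$.

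There is essentially no obstacle here; all the substantive work is contained in Lemma \ref{131}. The only point to check is degenerate $\sigma$, namely those for which $\Sigma_\sigma=\emptyset$. In that case we should verify that the per-$\sigma$ inequality still holds, or is vacuous, under the conventions in use. If $\Sigma_\sigma$ is empty, the construction in Lemma \ref{131} involves the empty set and gives $\mathcal{D}\leq 0$ trivially. So the inequality $\mdim(X,\mathfrak{V},\sigma)\leq\omdim(X,\mathfrak{V},\sigma)$ persists, and the supremum argument goes through unchanged.

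Combined with Lemma \ref{l2}, this lemma then yields the first inequality of Theorem \ref{T1}. The second inequality, $\lmdim\leq\umdim$, holds trivially since $\liminf\leq\limsup$.
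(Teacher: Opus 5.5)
Your proposal is correct and is exactly the paper's argument: apply Lemma \ref{131} for each $\sigma\in\Sigma$ and take the supremum over $\Sigma$. One small fix to your degenerate-case remark: if $\Sigma_\sigma=\emptyset$ then $s(\sigma,n,\epsilon)=0$, so $\omdim(X,\mathfrak{V},\sigma)=-\infty$, and the bound $\mathcal{D}\leq 0$ does not give the per-$\sigma$ inequality. What actually handles such $\sigma$ is that the IFS condition yields some $\sigma_0$ with $\Sigma_{\sigma_0}\neq\emptyset$, hence $\omdim(X,\mathfrak{V})\geq 0\geq\mdim(X,\mathfrak{V},\sigma)$.
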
 
\begin{proof}
Since 
$$\mdim(X,\mathfrak{V})=\sup_{\sigma\in\Sigma}\mdim(X,\mathfrak{V},\sigma) \ \text{ and } \omdim(X,\mathfrak{V})=\omdim(X,\mathfrak{V},\sigma),$$
by Lemma \ref{131} we have the desired inequality.
\end{proof} 

\begin{proof}[Proof of Theorem \ref{T1}]
It is a direct consequence of Lemma \ref{l3} combined with Lemma \ref{l2}. 
\end{proof}

\section{Small-Boundary Property and Proof of Theorem \ref{T2}}\label{sec:sbp}
In this section we will generalize the small-boundary property to generalized IFS. Let $(X,\mathfrak{V})$ be an iterated function system, and $A\subset X$. If $n>0$ and $x\in A$, we define the \emph{n-capacity} of $x$ relative to $A$ as the number:
$$\capa(n,x,A)=\sup_{\sigma\in\Sigma_x}\frac{1}{n}\sum_{i=0}^{n-1}1_A(v^{\sigma(i)}(x)),$$
and the \emph{orbit capacity} of $A$ as :
$$\ocap(A)=\lim_{n\rightarrow+\infty}\sup_{x\in X}\capa(n,x,A).$$
We say that $A$ is \emph{small} if the orbit capacity of $A$ is zero.
An iterated function system $(X,\mathfrak{V})$ has the \emph{small-boundary property} (SBP) if for each point $x\in X$ and every open neighborhood $U$ of $x$, there is an open set $V\subset U$ that has a small boundary and $x\in V$.

Now we are going to extend \cite[Theorem 5.4]{LindenstraussWeiss2000} for iterated functions systems. The next lemma is an essential ingredient of the proof.
\begin{lemma}\label{LSBP} Let $(X,\mathfrak{V})$ be an iterated function system with the SBP property and $\sigma\in\Sigma$. Then for every open cover $\alpha$ of $\Sigma_\sigma$ and every $\epsilon>0$ there is a subordinate partition of the unity $\phi_j: \Sigma_\sigma\rightarrow [0,1]$ ($j=1,...,|\alpha|$) depending on $\sigma$, such that 
	\begin{enumerate}
		\item $\sum_{j=1}^{|\alpha|}\phi_j(x)=1$;
		\item for each $j=1,...,|\alpha|$, there is $U\in\alpha$ such that $\supp(\phi_j)\subset U$;
		\item $\ocap(\bigcup_{j=1}^{|\alpha|}\phi_j^{-1}(0,1))<\epsilon$.  
	\end{enumerate}
\end{lemma}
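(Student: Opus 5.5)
We follow the proof of \cite[Lemma 5.3]{LindenstraussWeiss2000}, replacing the single homeomorphism by the orbit capacity $\ocap$ defined above. The construction is in two stages. First we build a cover of $\Sigma_\sigma$ by open sets with small boundaries refining $\alpha$. Then we thicken these boundaries into thin closed neighbourhoods and define the $\phi_j$ by interpolating across them.

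\emph{Stage 1.} Write $\alpha=\{A_1,\dots,A_m\}$. For each $x\in\Sigma_\sigma$ choose $A_{j(x)}\ni x$. By the SBP there is an open $V_x$ with $x\in V_x$, $\overline{V_x}\subset A_{j(x)}$ and $\ocap(\partial V_x)=0$; to get the closure inside $A_{j(x)}$, apply the SBP inside a smaller neighbourhood. By compactness of the closure of $\Sigma_\sigma$, finitely many such sets $V_1,\dots,V_k$ cover. Group them by $j$ and set $W_j=\bigcup_{i:\,j(i)=j}V_i\subset A_j$. Then $\partial W_j\subset\bigcup_i\partial V_i$.

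Orbit capacity is subadditive under finite unions. Indeed, for each $x$ and each $\sigma'$,
\[
\sum_{i<n} 1_{A\cup B}\le \sum_{i<n} 1_A+\sum_{i<n} 1_B,
\]
so $\capa(n,x,A\cup B)\le\capa(n,x,A)+\capa(n,x,B)$. Taking the supremum over $x$ and letting $n\to\infty$ (the limit exists by subadditivity in $n$, or we use $\limsup$) gives $\ocap(A\cup B)\le\ocap(A)+\ocap(B)$. Hence $E:=\bigcup_i\partial V_i$ is closed and satisfies $\ocap(E)=0$.

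\emph{Stage 2.} We would like a closed neighbourhood $E_\delta=\{y: d(y,E)\le\delta\}$ with $\ocap(E_\delta)<\epsilon$. This is the main obstacle: orbit capacity is only upper semicontinuous on closed sets when $\mathfrak V$ acts by maps behaving well under limits. The argument runs as follows.

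Suppose instead that $\ocap(E_{1/\ell})\ge\epsilon$ for every $\ell$. Then there are $n_\ell\to\infty$, points $x_\ell$ and sequences $\sigma_\ell$ whose orbit segments of length $n_\ell$ visit $E_{1/\ell}$ with frequency at least $\epsilon/2$. We need to pass to a limit orbit visiting $E$ with positive frequency, contradicting $\ocap(E)=0$. This requires a compactness argument on orbits. Here the lemma fixes $\sigma$, so I would restrict capacities to the fixed sequence $\sigma$ and its shifts, where the maps $v^{\sigma(i)}$ are fixed continuous maps on closed domains. For a fixed $n$, the function $x\mapsto\sum_{i<n}1_{E_\delta}(v^{\sigma(i)}x)$ is upper semicontinuous and decreases to the count for $E$ as $\delta\to0$. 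Dini-type compactness then gives a $\delta$ with $\sup_x\capa(n,x,E_\delta)<\epsilon$ for a single large $n$. Subadditivity in $n$ then propagates this bound to $\ocap$.

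If $\mathfrak V$ is infinite, the supremum over $\sigma'$ in $\capa$ is the delicate point. Here I would use that the SBP gives small boundaries for the whole system, and choose $V_x$ directly with boundaries whose closed neighbourhoods are small. That is, I would apply the SBP at two scales and use that $\partial V\subset\overline{V'}\setminus V''$ for nested small-boundary sets.

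\emph{Stage 3.} With $\delta$ fixed, set $\phi'_j(y)=\min\{1,\delta^{-1}d(y,X\setminus W_j)\}$ restricted to $\Sigma_\sigma$, and take $\phi_j=\phi'_j/\sum_l\phi'_l$. A point outside every $E_\delta$-type collar lies in the interior of some $W_j$ at distance at least $\delta$, so the denominator is positive. By construction $\sum_j\phi_j=1$, and $\supp\phi_j\subset\overline{W_j}\subset A_j$. Moreover $\phi_j(y)\in(0,1)$ forces $y$ to lie within $\delta$ of some $\partial W_l$, so
\[
\bigcup_j\phi_j^{-1}(0,1)\subset E_\delta,
\]
whose orbit capacity is less than $\epsilon$. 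To make $\phi_j(y)\in\{0,1\}$ away from $E_\delta$, we also need the $W_j$ disjointified: we replace them by $W_1$, $W_2\setminus\overline{W_1}$, and so on. Disjointifying only adds pieces of the existing boundaries, so $E$ is unchanged.
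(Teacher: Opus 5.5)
Your Stage~1 is a correct and more detailed version of the paper's first step: the refinement $\beta\succ\alpha$ by small-boundary sets $V_j$ with $\overline{V_j}\subset U_j$. The Dini argument of Stage~2, run along the fixed $\sigma$ at a single time $n$, supplies the compactness step behind the paper's choice of $\delta$.

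The genuine gap is Stage~3. With $\phi_j=\phi'_j/\sum_l\phi'_l$ you get $\phi_j(y)\in(0,1)$ exactly when $y\in W_j\cap W_l$ for some $l\neq j$. The $W_j$ come from an open cover, so they overlap on open sets. A point at distance $\geq\delta$ from both $X\setminus W_j$ and $X\setminus W_l$ gets $\phi_j=\phi_l=\tfrac12$ although it may be far from every boundary, so $\bigcup_j\phi_j^{-1}(0,1)\subset E_\delta$ is false. Disjointifying does not repair this. The sets $W_1$, $W_2\setminus\overline{W_1},\dots$ all miss $\partial W_1$, so they no longer cover $\Sigma_\sigma$ and the denominator vanishes there. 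Where the denominator is positive, the quotient is just $1_{W'_j}$, which is not continuous (e.g.\ when $\Sigma_\sigma$ is connected).

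The missing idea is to resolve overlaps by priority instead of averaging, as the paper does. Set $\psi_j=1$ on $V_j$ and $\psi_j=\max(0,1-d(x,\partial V_j)/\delta)$ off $V_j$, then $\phi_1=\psi_1$ and $\phi_{i+1}=\min(\psi_{i+1},1-\sum_{l\le i}\phi_l)$. Since the $V_j$ cover, the running sum reaches $1$, so $\sum_j\phi_j=1$. Also $\supp\phi_j\subset U_j$ once $\delta<d(\partial V_j,X\setminus U_j)$. Inductively, $\phi_j(x)\in(0,1)$ forces $\psi_l(x)\in(0,1)$ for some $l\le j$, i.e.\ $x\in\bigcup_l B_\delta(\partial V_l)$.

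Separately, the step in Stage~2 from the fixed-$\sigma$, fixed-$n$ estimate to $\ocap(E_\delta)<\epsilon$ is not justified. Subadditivity in $n$ holds for the supremum over all pairs $(x,\sigma')$, a class closed under shifts, not for frequencies along one $\sigma$. Compactness controls only finitely many compositions. It therefore yields a $\delta$ uniform over all $\sigma'$ when $\mathfrak V$ is finite (via the finitely many words of length $n$), but not in general. The ``two scales'' remark does not help, because the SBP bounds capacities of boundaries, not of shells $\overline{V'}\setminus V''$.

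In fact, for infinite $\mathfrak V$, item (3) as stated can fail. On $X=[0,1]$ let $\mathfrak V$ consist of $u(x)=(x+1)/2$ and the identities of the singletons $\{q\}$, $q\in\mathbb{Q}\cap(0,1)$. An irrational point is hit at most once by any orbit, so intervals with irrational endpoints give the SBP. But every nonempty open set contains some such $q$, which stays put along $(\mathrm{id}_{\{q\}},\mathrm{id}_{\{q\}},\dots)$, so every nonempty open subset of $[0,1]$ has orbit capacity $1$. For $\sigma=(u,u,\dots)$ and $\alpha=\{[0,2/3),(1/3,1]\}$, connectedness of $[0,1]=\Sigma_\sigma$ forces $\bigcup_j\phi_j^{-1}(0,1)\neq\emptyset$ for any subordinate partition of unity.

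What can be proved is the frequency bound along the fixed $\sigma$ at one time $N$. This is what the paper's argument actually delivers and what it later uses in the proof of Theorem~\ref{T2}, and your Dini step provides it.
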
		

\begin{proof}
Since $(X,\mathfrak{V})$ has SBP, there is a refinement $\beta\succ\alpha$ such that there is 
one-to-one correspondence between the elements $U_j$ of $\alpha$ and $V_j$ of $\beta$ with $\overline{V_j}\subset U_j$, and the elements of $\beta$ have small boundaries.

Let $N>0$ such that for each $j\in\{1,...,|\alpha|\}$ and for each $x\in \Sigma_\sigma$,
$$\frac{1}{N}\sum_{i=0}^{N-1}1_{\partial V_j}(v^{\sigma(i)}(x))<\frac{\epsilon}{|\alpha|}.$$
Hence, denoting $$B_{\delta,j}=B_\delta(\partial V_j)=\{y\in \Sigma_\sigma;d(y, \partial V_j)<\delta\},$$ there is $\delta>0$ such that
$$\frac{1}{N}\sum_{i=0}^{N-1}1_{B_{\delta,j}}(v^{\sigma(i)}(x))<\frac{\epsilon}{|\alpha|},$$
and $B_{\delta,j}\subset U_j$, for each $j\in\{1,...,|\alpha|\}$. Define

$$ \psi_j(x)=\begin{cases}
	1, \ \text{if} \ x\in V_j, \\
	\max(0,1-\frac{d(x,\partial V_j)}{\delta}), \ \text{otherwise}.
\end{cases} $$
And finally we define the desired functions by induction:
$$\begin{cases}\phi_1(x)=\psi_1(x) \\
	\phi_{i+1}(x)=\min(\psi_{i+1}(x),1-\sum_{l=1}^{i}\phi_l(x)).
	 \end{cases}$$

\end{proof}

\subsection{Proof of Theorem \ref{T2}}
We now turn to the proof of Theorem \ref{T2}. More precisely, we show that if $(X,\mathfrak{V})$ is an iterated function system satisfying the SBP, then its mean dimension vanishes:
\[
\mdim(X,\mathfrak{V})=0.
\]		   

\begin{proof}
Fix $\sigma\in\Sigma$ and $\alpha$ a cover of $X$. For a given $\epsilon>0$ take a subordinate partition of unity $\{\phi_j\}$ as in Lemma \ref{LSBP}. 
Define $A=\bigcup_{i=1}^{|\alpha|}\phi_j^{-1}(0,1)$. Let $N$ be large enough such that 
\begin{equation}\label{eq1} 
\frac{1}{N}\sum_{i=1}^{N-1}1_A(v^{\sigma(i)}(x))<\epsilon
\end{equation}
for each $x\in\Sigma_\sigma$. Define $\Phi:\Sigma_\sigma\rightarrow [0,1]^{|\alpha|}$ 
by \[\Phi(x)=(\phi_1(x),...,\phi_{|\alpha|}(x)),\]
and map $f_N:\Sigma_\sigma\rightarrow\mathbb{R}^{|\alpha|N}$ by $$f_N(x)=(\Phi(x),\Phi(v^{\sigma(1)}(x)),...,\Phi(v^{\sigma(N-1)}(x))).$$
Let $e_j^i$, $i=1,...,N$, $j=1,...,|\alpha|$ be the canonical basis of $\mathbb{R}^{|\alpha|N}$. For each $M<\epsilon N$, $I=\{i_1,...,i_M\}$ and $\eta\in\{0,1\}^{|\alpha|N}$ define 
$$C(I,\eta)=\spn\{e_j^i;i\in I, \ 1\leq j \leq|\alpha|\}+\eta.$$
By \eqref{eq1}, we have that $$f_N(\Sigma_\sigma)\subset \bigcup_{\eta\in\{0,1\}^{|\alpha|N}}\bigcup_{|I|<\epsilon N}C(I,\eta).$$
Hence, $f_N$ is $\alpha_0^{N-1}(\sigma)$-compatible. By \cite[Proposition 2.4]{LindenstraussWeiss2000},
$$\mathcal{D}(\alpha_0^{N-1}(\sigma))<\epsilon |\alpha|N.$$
Therefore, $$\mdim(X,\mathfrak{V},\sigma)=0.$$
Since $\sigma$ is arbitrary, 
$$\mdim(X, \mathfrak V)=0.$$
\end{proof}

\section{Gluing Orbit Property}\label{sec:gop}
As before, $(X,{\mathfrak V})$ denotes an IFS. For a natural number $M$, consider the space of sequences $\Sigma_M:=\{1,2,...,M\}^\mathbb{N}$ with entries in $\{1,2,...,M\}$. 
\begin{definition}  
	An \textit{orbit sequence} is a finite or infinite sequence$$\mathcal{C}=\{(x_j,\sigma_j,m_j)\}_{1\leq j\leq N},$$ with $N$ finite or infinite, 
	of ordered triples such that $x_j\in X$, $\sigma_j\in\Sigma_{x_j}$ and $m_j\in\mathbb{N}$. If $N$ is finite, then $m_N$ is allowed to be $+\infty$. 
	A \textit{gap} 
	is a sequence of positive integers $$\mathcal{G}=\{t_j\}_{1\le j\le N-1}.$$ For $\epsilon>0$ we say that $(\mathcal{C},\mathcal{G})$ is \textit{$\epsilon$-traced} by $(z,\sigma)\in \Psi(X)$ 
	if for $1\le j\le N$ 
	$$d(v^{\sigma(s_j+l)}(z),v^{\sigma_j(l)}(x_j))<\epsilon \text{ for } l=0, \ldots, m_j-1,$$
	where $$s_1:=0 \text{ and } s_j:=\sum_{i=1}^{j-1}(m_i+t_i-1) \text{ for } 2\le j\le N. $$
	We say that $(X,{\mathfrak V})$ has the \textit{Gluing Orbit Property} if for each $\epsilon>0$ there is $M=M(\epsilon)>0$ such that for each orbit sequence $\mathcal{C}$, there is a gap $\mathcal{G}\in\Sigma_M$ such that $(\mathcal{C},\mathcal{G})$ can be $\epsilon$-traced.
\end{definition}

\begin{definition}	
	\begin{itemize}
		\item $A\subset \mathbb{N}$ is \emph{syndetic} if there is $L>0$ such that $$A\cap[n,n+L-1]\neq\emptyset, \text{ for every }n\in\mathbb Z^+.$$
		\item 
		For $(x,\sigma)\in\Psi(X)$ 
		and $\epsilon>0$, denote $$R(x,\sigma,\epsilon):=\{n\in\mathbb{Z}^+; d(v^{\sigma(n)}(x),x)<\epsilon\},$$
		and $$R(\epsilon):=\bigcap_{x\in X}\bigcap_{\sigma\in\Sigma_x}R(x,\sigma,\epsilon).$$
		\item $x\in X$ is \emph{almost periodic} if $R(x,\sigma,\epsilon)$ is syndetic for every $\sigma\in\Sigma_x$ and $\epsilon>0$.
		\item $(X,{\mathfrak V})$ is \emph{uniformly almost periodic} if $R(\epsilon)$ is syndetic for every $\epsilon>0$.
		\item We say that 
		$\sigma\in\Sigma$ is \emph{equicontinuous} if for every $\epsilon>0$ there is $\delta>0$ such that for any $x,y\in\Sigma_\sigma$ with $d(x,y)<\delta$, then $$d(v^{\sigma(n)}(x),v^{\sigma(n)}(y))<\epsilon, \text{ for each } n\in\mathbb Z^+.$$
	\end{itemize}

\end{definition}

For $\sigma=(v_1, v_2, \ldots)$ and $n\in\mathbb{Z}^+$ let $\sigma(+\infty, n)=(v_{n+1}, v_{n+2}, \ldots)$.

\begin{definition}
	We say that 
	$\sigma\in\Sigma$ is \emph{uniformly continuous} if for each $k\in\mathbb{Z}^+$ 
	and every $\epsilon>0$ there is $\delta=\delta(\sigma,k,\epsilon)>0$ such that for each $n\in\mathbb Z^+$
	and $1\leq j\leq k$, if $x,y\in\Sigma_{\sigma(+\infty,n)}$ with $d(x,y)<\delta$ then $$d(v^{\sigma(n+j,n)}(x),v^{\sigma(n+j,n)}(y))<\epsilon.$$
\end{definition}
\begin{remark}
If $\sigma=(v_i)_{i\in\mathbb{N}}$ is such that the set $\{v_i;i\in\mathbb{N}\}$ is finite, then $\sigma$ is uniformly continuous. In particular, if ${\mathfrak V}$ is finite every $\sigma$ is uniformly continuous.
\end{remark} 

The next result is an extension of \cite[Theorem 4.38]{GH}.

\begin{theorem} Let $(X,{\mathfrak V})$ be a uniformly almost periodic IFS. If $\sigma\in\Sigma$ 
is uniformly continuous, then 
it is equicontinuous.
\end{theorem}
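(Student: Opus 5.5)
We adapt the classical argument of \cite[Theorem 4.38]{GH}: uniform almost periodicity returns every orbit uniformly close to its starting point along a syndetic set of times, and uniform continuity handles the bounded gaps between these return times.

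Fix $\epsilon>0$. Since $(X,\mathfrak V)$ is uniformly almost periodic, $R(\epsilon/3)$ is syndetic. Choose $L>0$ with $R(\epsilon/3)\cap[n,n+L-1]\neq\emptyset$ for every $n\in\mathbb Z^+$. Next apply uniform continuity of $\sigma$ with $k=L$ and tolerance $\epsilon/3$. This gives $\delta_1=\delta(\sigma,L,\epsilon/3)>0$ such that for every $n\in\mathbb Z^+$, every $1\le j\le L$, and all $x,y\in\Sigma_{\sigma(+\infty,n)}$ with $d(x,y)<\delta_1$, we have $d(v^{\sigma(n+j,n)}(x),v^{\sigma(n+j,n)}(y))<\epsilon/3$. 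Put $\delta=\min(\delta_1,\epsilon/3)$.

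Now take $x,y\in\Sigma_\sigma$ with $d(x,y)<\delta$ and any $m\in\mathbb Z^+$. If $m<L$, then $m=0+j$ with $j\le L$ (or $m=0$), so uniform continuity at base point $n=0$ gives $d(v^{\sigma(m)}(x),v^{\sigma(m)}(y))<\epsilon/3$ directly. If $m\ge L$, the idea is to write $m=p+j$, where $p$ is a return time of both orbits and $1\le j\le L$. Syndeticity gives $p\in R(\epsilon/3)\cap[m-L,m-1]$. Since $R(\epsilon/3)$ is an intersection over all points and all admissible sequences, $d(v^{\sigma(p)}(x),x)<\epsilon/3$ and $d(v^{\sigma(p)}(y),y)<\epsilon/3$. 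Hence
\[d(v^{\sigma(p)}(x),v^{\sigma(p)}(y))<\epsilon/3+\delta+\epsilon/3\le\epsilon .\]
This is not yet below $\delta_1$, so we cannot push it forward by $j$ steps. The fix is to compare $x$ with $y$ at time $j$ instead. Since $p\in R(\cdot)$ holds for every point and every sequence, we apply it to the point $v^{\sigma(p)}(x)$ along the shifted sequence $\sigma(+\infty,p)$, and similarly for $y$. This requires that $R(\cdot)$ be taken for the shifted sequences as well, which holds because the intersection in the definition of $R(\epsilon)$ runs over all $\sigma\in\Sigma_x$.

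The cleaner route is therefore to exploit the symmetry between times $j$ and $p+j$, using returns of the images rather than of $x$ itself. Set $x'=v^{\sigma(j)}(x)$ with $j=m-p$. This needs a sequence $\tau$ such that $v^{\tau(p)}(x')=v^{\sigma(m)}(x)$, i.e. $\tau=\sigma(+\infty,j)$. But that yields $v^{\sigma(p+j,j)}$, not the required composite $v^{\sigma(m,p)}\circ v^{\sigma(p)}$ in the right order. Thus the main obstacle is that the ordering of the maps prevents the classical trick, namely
\[d(f^m x,f^m y)\le d(f^m x,f^j x)+d(f^j x,f^j y)+d(f^j y,f^m y).\]
Here we would instead use
\[d(v^{\sigma(m)}x,v^{\sigma(j)}x)\le d(v^{\sigma(m)}x,\ v^{\sigma(m)}(\cdot))\ldots\]
My first attempt is to pick $p\in R(\epsilon/3)$ with $p\in[m-L,m-1]$. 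Since $R(\epsilon/3)$ applies to the point $v^{\sigma(m-p)}x$ along $\sigma(+\infty,m-p)$, applying it there gives
\[d\big(v^{\sigma(m)}x,\ v^{\sigma(m-p)}x\big)<\epsilon/3,\]
and likewise for $y$. Then $m-p=j\le L$, so $d(v^{\sigma(j)}x,v^{\sigma(j)}y)<\epsilon/3$ by uniform continuity at $n=0$. The triangle inequality then gives $d(v^{\sigma(m)}x,v^{\sigma(m)}y)<\epsilon$, which proves equicontinuity. The step to verify carefully is that $v^{\sigma(m-p)}x\in X$ has $\sigma(+\infty,m-p)$ in its $\Sigma$-set, so that $R(\epsilon/3)$ legitimately applies to it.
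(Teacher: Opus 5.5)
Your final paragraph is correct and is essentially the paper's proof. You write $m=p+j$ with $p\in R(\epsilon/3)\cap[m-L,m-1]$ and $1\le j\le L$, bound $d(v^{\sigma(j)}x,v^{\sigma(j)}y)$ by uniform continuity at base point $0$, and apply the return time $p$ to $v^{\sigma(j)}x$ and $v^{\sigma(j)}y$ along $\sigma(+\infty,j)$; that sequence lies in their $\Sigma$-sets simply because $x,y\in\Sigma_\sigma$. The ``ordering obstacle'' you raise midway is spurious, since $v^{\sigma(p+j,j)}\circ v^{\sigma(j)}=v^{\sigma(m)}$, and your choice $\delta\le\epsilon/3$ usefully covers $m=0$, which uniform continuity as defined (only $1\le j\le k$) does not.
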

\begin{proof}
	Since $(X,{\mathfrak V})$ is uniformly almost periodic for $\epsilon>0$ given we have that $R(\frac{\epsilon}{3})$ is syndetic. By definition of syndetic, there is $L>0$ such that $$R(\frac{\epsilon}{3})\cap[n,n+L-1]\neq\emptyset,$$
for each $n\in\mathbb{Z}^+$. 
Take $\delta=\delta(\sigma,L,\frac{\epsilon}{3})$ from the definition of uniform continuity. Therefore, for $n\le L$ and $x,y\in\Sigma_\sigma$ with $d(x,y)<\delta$, \[d(v^{\sigma(n)}(x), v^{\sigma(n)}(y))<\epsilon/3.\]
For $n>L$ there is $$j\in R(\frac{\epsilon}{3})\cap[n-L,n-1]$$ and $k\in\{1,...,L\}$ such that $n=j+k$. Hence $d(v^{\sigma(j+k,k)}(x),x)<\frac{\epsilon}{3}$ for each $x\in \Sigma_{\sigma(+\infty,k)}$. Then, if $x,y\in\Sigma_\sigma$ and $d(x,y)<\delta$
\begin{align*}
	d(v^{\sigma(n)}(x),v^{\sigma(n)}(y))&=d(v^{\sigma(j+k)}(x),v^{\sigma(j+k)}(y)) \\ &\leq d(v^{\sigma(j,k)}(v^{\sigma(k)}(x)),v^{\sigma(k)}(x)) + d(v^{\sigma(k)}(x),v^{\sigma(k)}(y)) \\&\quad+ d(v^{\sigma(j,k)}(v^{\sigma(k)}(y)),v^{\sigma(k)}(y)) \\ &<\frac{\epsilon}{3}+\frac{\epsilon}{3}+\frac{\epsilon}{3}=\epsilon.
\end{align*}
Hence 
$\sigma$ is equicontinuous.

\end{proof}

\begin{definition}
We say that $(X,{\mathfrak V})$ has a \emph{uniformly rigid path} if there are $\sigma\in\Sigma$ and a sequence $\{m_k\}$ of natural numbers such that $v^{\sigma(m_k)}\rightarrow Id$ uniformly when $k\rightarrow +\infty$. In this case $\sigma$ is said to be \emph{uniformly rigid}.
\end{definition}

\begin{example} For any homeomorphism $f:X\rightarrow X$ on a compact metric space $X$, with the Gluing Orbit property, we have that the IFS defined by $f$ and the identity on $X$ has the Gluing Orbit property. It is obvious that, if $X$ is not trivial, then the identity is not minimal, but it is equicontinuous and uniformly rigid. If $f$ has zero topological entropy, then the result of \cite{PSun} shows that $f$ is minimal, equicontinuous and uniformly rigid.  
\end{example}


A point $x\in X$ is a \textit{recurrent point of} $(X,{\mathfrak V})$ if for every $\epsilon>0$ and $\sigma\in\Sigma_x$ 
there is $n\in\mathbb N$
such that $$d(v^{\sigma(n)}(x),x)<\epsilon.$$
 The next lemma is based on \cite[Lemma 3.1]{Sun2019}.
\begin{lemma} Let $(X,{\mathfrak V})$ be an iterated function system with the Gluing Orbit property. Assume that there are $x, y\in X$, $\sigma\in\Sigma_x$ and $\delta>0$ such that $$d(v^{\sigma(n)}(x),y)>\delta, \ \forall  n\in\mathbb{Z}^+.$$ Then there is a non-recurrent point of $(X,{\mathfrak V})$.
\end{lemma}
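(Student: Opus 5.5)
Following \cite[Lemma 3.1]{Sun2019}, the plan is to build, via the Gluing Orbit property, a single pair $(z,\varphi)\in\Psi(X)$ whose orbit first comes close to $y$ and then stays forever near the orbit of $(x,\sigma)$. Such a $z$ cannot be recurrent.

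Set $\epsilon=\delta/3$ and let $M=M(\epsilon)$ be given by the Gluing Orbit property. Consider the orbit sequence
\[
\mathcal{C}=\{(y,\tau,1),(x,\sigma,+\infty)\},
\]
with $N=2$ and $m_2=+\infty$, as the definition permits. Here $\tau$ is any element of $\Sigma_y$. If $\Sigma_y=\emptyset$, first replace $y$ by a point $y'$ with $\Sigma_{y'}\neq\emptyset$ and $d(y,y')<\delta/3$. If no such point exists, I would instead glue a segment starting near $y$; at worst one reduces to the case where $y$ itself carries an infinite orbit, which is what the tracing definition requires. The Gluing Orbit property gives a gap $t_1\in\{1,\dots,M\}$ and a pair $(z,\varphi)\in\Psi(X)$ such that $d(z,y)<\epsilon$ and
\[
d\bigl(v^{\varphi(s_2+l)}(z),v^{\sigma(l)}(x)\bigr)<\epsilon \quad\text{for all } l\ge 0,
\]
where $s_2=m_1+t_1-1=t_1$.

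Now suppose $n\ge s_2$. Then
\[
d\bigl(v^{\varphi(n)}(z),z\bigr)\ \ge\ d\bigl(v^{\sigma(n-s_2)}(x),y\bigr)-d\bigl(v^{\varphi(n)}(z),v^{\sigma(n-s_2)}(x)\bigr)-d(z,y)\ >\ \delta-2\epsilon=\delta/3 .
\]
So the orbit of $(z,\varphi)$ eventually avoids the ball $B(z,\delta/3)$. It remains to deal with the finitely many times $1\le n<s_2\le M$ where a return might still happen.

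\textbf{Main obstacle.} The recurrence definition asks only for some $n\in\mathbb N$ with $d(v^{\varphi(n)}(z),z)<\epsilon'$, and this $n$ might lie in $[1,s_2)$. I would handle it in one of two ways.

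The first option is to pass to the shifted point $z'=v^{\varphi(s_2)}(z)$ with the shifted sequence $\varphi(+\infty,s_2)$. This point is $\epsilon$-close to $x$ and its whole forward orbit shadows that of $(x,\sigma)$. However, this only gives non-recurrence if $x$ is bounded away from its own orbit, which need not hold.

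The preferred option is therefore to argue by contradiction. If $z$ is recurrent for $\varphi$, then for every $\epsilon'$ there is some $n$ with $d(v^{\varphi(n)}(z),z)<\epsilon'$. For $\epsilon'<\delta/3$ the bound above forces every such $n$ to satisfy $n<s_2\le M$. Hence for every $\epsilon'>0$ there is $n\in\{1,\dots,M\}$ with $v^{\varphi(n)}(z)$ within $\epsilon'$ of $z$. Since this set of candidates is finite, some fixed $n_0\le M$ works for all $\epsilon'$, so $v^{\varphi(n_0)}(z)=z$. Then $z$ is periodic along the initial block of $\varphi$, and $v^{\varphi(n_0)}$ fixes $z$. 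I would then exploit the freedom in the IFS setting: recurrence must hold for every sequence in $\Sigma_z$. So one modifies $\varphi$ into $\varphi'\in\Sigma_z$ that keeps the first $s_2$ maps of $\varphi$ and agrees with $\varphi$ afterwards, but avoids a return at time $n_0$. If that is impossible, one re-glues, using the Gluing Orbit property with a longer first segment or a smaller $\epsilon$, to obtain a tracing pair with no exact periodic return in $[1,M]$.

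Making this last step rigorous is where I expect the real difficulty. The fallback is to exhibit the non-recurrent point through a pair $(z,\varphi)$ for which every return time would have to be bounded by $M$, and to rule out exact bounded returns by choosing $\mathcal{C}$ so that the gluing can be repeated with different gaps.
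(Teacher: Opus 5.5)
Your construction is the paper's: glue $\{(y,\cdot,1),(x,\sigma,+\infty)\}$ to get $(z,\varphi)$ with $d(z,y)<\epsilon$ and tracing from time $s_2=t_1$, so that $d(v^{\varphi(n)}(z),z)>\delta-2\epsilon$ for $n\ge s_2$. Your finiteness argument then correctly reduces recurrence of $z$ along $\varphi$ to an \emph{exact} return $v^{\varphi(n_0)}(z)=z$ with $1\le n_0<s_2$. The gap is that you never rule this case out, and the repairs you propose fail:
\begin{itemize}
\item A sequence $\varphi'$ that keeps the first $s_2$ maps of $\varphi$ satisfies $v^{\varphi'(n_0)}(z)=v^{\varphi(n_0)}(z)=z$, because $n_0<s_2$. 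So it cannot ``avoid a return at time $n_0$''.
\item Re-gluing with a longer first block or a smaller $\epsilon$ gives no control over exact returns of the new tracing pair.
\item Shifting all the way to time $s_2$ (your first option) loses the closeness to $y$, as you noticed yourself.
\end{itemize}

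The missing idea is to shift by exactly $n_0$. Since $v^{\varphi(n_0)}(z)=z$, the sequence $\varphi'=\varphi(+\infty,n_0)$ lies in $\Sigma_z$, and
\[
v^{\varphi'(s_2-n_0+l)}(z)=v^{\varphi(s_2+l,n_0)}\bigl(v^{\varphi(n_0)}(z)\bigr)=v^{\varphi(s_2+l)}(z),\qquad l\ge 0.
\]
Hence $(z,\varphi')$ still $\epsilon$-traces $\mathcal{C}$, because the first block only requires $d(z,y)<\epsilon$. It does so with the strictly smaller gap $s_2-n_0\ge 1$.

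This is why the paper takes a tracing pair with \emph{minimal} gap $t_0$. Then $v^{\varphi(s)}(z)\neq z$ for $1\le s<t_0$, so $\lambda=\min_{1\le s<t_0}d(v^{\varphi(s)}(z),z)>0$ (take $\lambda=+\infty$ if $t_0=1$). Consequently $d(v^{\varphi(n)}(z),z)\ge\min\{\delta-2\epsilon,\lambda\}>0$ for every $n\ge 1$, so $z$ is non-recurrent along $\varphi$. Equivalently, you can iterate the shift: the gap decreases strictly, so the process stops. Your instinct to use the freedom of changing the sequence was right; the point is to drop the first $n_0$ maps, not to keep them.

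Separately, your worry about $\Sigma_y=\emptyset$ is justified; the paper sidesteps it by writing $(y,Id,1)$. Your fallback, however, is not an argument, and no argument works in general. Take $X=\{a,b\}$ and $\mathfrak V=\{v\}$ with $D(v)=\{a\}$, $v(a)=a$. The Gluing Orbit property holds, and so does the hypothesis with $x=a$, $y=b$, $\delta<d(a,b)$. Yet every point is recurrent: $a$ is fixed, and $b$ is recurrent vacuously since $\Sigma_b=\emptyset$. So an extra assumption is needed, for instance that $y$ lies in the closure of $\{w:\Sigma_w\neq\emptyset\}$. Under that assumption you replace $y$ by such a $w$ with $d(y,w)<\delta/2$ and replace $\delta$ by $\delta/2$.
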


\begin{proof} Let $0<\epsilon<\frac{\delta}{3}$ and $m=M(\epsilon)$ from the gluing orbit property. Define $\mathcal{C}=\{(y,Id, 1),(x,\sigma,+\infty)\}$. By the Gluing Orbit Property, there is $\mathcal{G}=\{t\}$ with $1\le t\le m$, such that $(\mathcal{C},\mathcal{G})$ is $\epsilon$-traced by some $(z,\phi)\in\Psi(X)$. Let $t_0$ be the minimum gap in $[1, m]$ that satisfies this property. So there is $(z, \phi)\in\Psi(X)$ such that $d(z, y)<\epsilon$ and, for each $n\geq 0$,   $$d(v^{\phi(n+t_0)}(z),v^{\sigma(n)}(x))<\epsilon.$$ 
Then $d(v^{\phi(n+t_0)}(z), z)>\delta-2\epsilon$ for all $n\ge 0$. For $1\le s<t_0$, $v^{\phi(s)}(z)\neq z$. Otherwise if $v^{\phi(s)}(z)= z$ for some $1\leq s< t_0$, then for all $n\in\mathbb Z^+$ \[d(v^{\phi(n+t_0, s)}z, v^{\sigma(n)}(x))=d(v^{\phi(n+t_0)}(z),v^{\sigma(n)}(x))<\epsilon,\]
so that with $\mathcal G'=\{t_0-s\}$ and $\phi'=\phi(+\infty, s)$, $(\mathcal C, \mathcal G')$ is $\epsilon$-traced by $(z, \phi')$, then  $t_0-s$ would be a smaller gap than $t_0$.

Therefore, for all $n\in\mathbb N$, with $\lambda=\min\{d(v^{\phi(s)}(z),z); 1\leq s<t_0\}$
\[d(v^{\phi(n)}(z), z)\ge\min\{\delta-2\epsilon, \lambda\}>0.\]

\end{proof}

\begin{lemma} Let $(X,{\mathfrak V})$ be an iterated function system with the Gluing Orbit property. Then there is $p\in X$ such that $\{v^{\sigma(n)}(p);  \sigma\in\Sigma_p, n\in\mathbb{N}\}$ is dense in $X$.
\end{lemma}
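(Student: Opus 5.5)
Since $X$ is compact, I fix a countable dense set $\{y_1,y_2,\dots\}\subset X$ (or, to avoid countability issues, a finite $\frac1k$-dense set at each scale $k$ and concatenate these). The plan is to build a single point $p$ together with a path $\sigma\in\Sigma_p$ whose orbit passes arbitrarily close to every $y_i$. This uses the Gluing Orbit Property with an infinite orbit sequence made of length-one segments. Since a single gluing only gives $\epsilon$-approximation for one fixed $\epsilon$, I will combine scales through compactness.

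\textbf{Step 1 (one scale).} For each $k\in\mathbb N$ I fix a finite $\frac1k$-dense set $\{y^k_1,\dots,y^k_{n_k}\}$. For each $y^k_i$ I need some $\sigma^k_i\in\Sigma_{y^k_i}$ so that the triple $(y^k_i,\sigma^k_i,1)$ is an admissible element of an orbit sequence. Here is a subtlety: $\Sigma_{y}$ may be empty for some $y$. I would handle this by choosing the dense points inside the set of points admitting infinite paths. That set is nonempty by the IFS hypothesis, and the Gluing Orbit Property itself applied to pairs suggests it is dense. If it is not dense, I would instead prove density of the orbit in its closure. Alternatively I would note that the previous lemma's usage $(y,Id,1)$ indicates the authors implicitly allow it, and I will follow the same convention.

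\textbf{Step 2 (all scales in one orbit sequence).} I form the infinite orbit sequence
\[
\mathcal C=\{(y^1_1,\sigma^1_1,1),\dots,(y^1_{n_1},\sigma^1_{n_1},1),(y^2_1,\sigma^2_1,1),\dots\}.
\]
For a fixed $\epsilon$, the Gluing Orbit Property gives a gap $\mathcal G\in\Sigma_{M(\epsilon)}$ and $(z,\phi)\in\Psi(X)$ $\epsilon$-tracing $(\mathcal C,\mathcal G)$. Then every $y^k_i$ is within $\epsilon$ of some $v^{\phi(s)}(z)$, so $\{v^{\phi(n)}(z)\}$ is $(\epsilon+\frac1k)$-dense for every $k$, hence $2\epsilon$-dense. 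This gives, for each $\epsilon=\frac1m$, a point $p_m$ with an $\frac2m$-dense forward orbit along some path.

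\textbf{Step 3 (passing to a single point) — the main obstacle.} Having a $\frac2m$-dense orbit for every $m$ is not yet one dense orbit. I would try to glue the finite orbit segments of $p_m$ that already realize $\frac2m$-density (finite by compactness) in a nested fashion. I would choose $\epsilon_m$ decreasing fast, and trace an orbit sequence that lists the segments needed at scale $m$ and then repeats. The difficulty is that gluing uses one fixed $\epsilon$.

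My first attempt is cleaner. I use the fact that the set of points $p$ whose orbit (along some path) meets every ball $B(y^k_i,\frac1k)$ is a countable intersection. Then I pass to the limit $p=\lim p_{m_j}$ of a subsequence, with paths chosen via a diagonal argument: the finite segments used at scale $m$ are fixed first and later gluings keep earlier segments with slack, by tracing them at scale $\epsilon_m$ with $\sum\epsilon_m$ small. The closedness of the domains $D(v)$ and the continuity of finitely many maps $v$ along a finite segment ensure the limit point still has the desired finite segment with the limiting path. If $\mathfrak V$ is infinite, the path coordinates may not converge. In that case I would fix the path prefix at each stage (diagonal selection of the prefix) rather than take limits of paths. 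This compactness/diagonal step is where I expect the real work to lie.
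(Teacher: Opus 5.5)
\textbf{The gap: Step 3 is not carried out, and the fix you propose for infinite $\mathfrak V$ cannot work.} Each application of the Gluing Orbit Property returns a fresh pair $(z,\phi)$, and nothing in the property lets you prescribe its path. So you cannot ``fix the path prefix'' from one stage to the next. A limit $p=\lim p_{m_j}$ then comes with no candidate path in $\Sigma_p$ unless the $\phi_{m_j}$ stabilize on every finite prefix, and you need an admissible infinite path at $p$ itself, not at nearby points. For finite $\mathfrak V$ your nested scheme can be completed: put the already built segment $(p_m,\phi_m,T_m)$ first in the next orbit sequence, trace at $\epsilon_{m+1}$ with $\sum_j\epsilon_j<\infty$, pass to a subsequence along which the points and the paths converge coordinatewise, and use closedness of domains plus continuity of the finitely many compositions involved. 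But the statement allows arbitrary $\mathfrak V$.

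\textbf{You are proving more than is asked.} Your target is a single path with dense orbit, i.e.\ $Tran(X,{\mathfrak V})\neq\emptyset$, and the paper explicitly notes that this lemma does not give that. The set in the statement is a union over all $\sigma\in\Sigma_p$, so the path may depend on the target open set, and then no limit of paths is needed. You already observed that the good set is a countable intersection; the paper simply applies Baire to it. For a countable base $\{U_k\}$ it takes $A_k=\{x:\exists\sigma\in\Sigma_x,\ n\in\mathbb N,\ v^{\sigma(n)}(x)\in U_k\}$. Each $A_k$ is dense: glue a length-one segment at a point of a given open set to a segment starting in $U_k$. Any $p\in\bigcap_k A_k$ then works. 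That route needs each $A_k$ to be open, which the paper asserts as a union of preimages. With closed domains, however, $v^{-\sigma(n)}(U_k)$ is a priori only relatively open in the domain of $v^{\sigma(n)}$, so that step also needs care.

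\textbf{Step 1 is also wrong as stated.} Your remark that gluing ``suggests'' the points with $\Sigma_x\neq\emptyset$ are dense is false. Gluing only constrains orbit sequences built from such points and says nothing about open sets containing none of them. For example:
\begin{itemize}
\item Let $f$ be the two-sided full shift on a Cantor set $Y$, let $X=Y\cup\{q\}$ with $q$ isolated, and let $\mathfrak V=\{f\}$ with $D(f)=Y$.
\item This is an IFS with the Gluing Orbit Property, since the shift has specification.
\item Yet $\Sigma_q=\emptyset$, and no orbit set meets the open set $\{q\}$, so the lemma as stated fails.
\end{itemize}
Density of $\{x:\Sigma_x\neq\emptyset\}$ is therefore a genuine extra hypothesis; the paper's claim that each $A_k$ is dense uses it tacitly. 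State it explicitly rather than sidestepping it with the convention $(y,Id,1)$, which amounts to adding the identity to $\mathfrak V$ and so changes the system.
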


\begin{proof} Let $\{U_k\}_{k\in\mathbb{N}}$ be a countable basis of nonempty open sets of X. For each $k\in\mathbb{N}$, define $$A_k=\{x\in X;  \exists \sigma\in\Sigma_x \text{ and }  n\in\mathbb{N} \text{ such that } v^{\sigma(n)}(x)\in U_k\}=\bigcup_{\sigma\in\Sigma}\bigcup_{n\in\mathbb{N}}v^{-\sigma(n)}(U_k). $$ Each $A_k$ is an open set, since it is a union of open sets. By the Gluing Orbit property, each $A_k$ is dense. By Baire Category Theorem the set $T=\bigcap_{k\in\mathbb{N}}A_k$ is nonempty. Fix $p\in T$. We shall prove that $\{v^{\sigma(n)}(p);  \sigma\in\Sigma_p, n\in\mathbb{N}\}$ is dense in $X$. For each $k\in\mathbb{N}$, we have that $p\in A_k$. By definition of $A_k$ there is $\sigma\in\Sigma_p$ and $n\in\mathbb{N}$ such that $v^{\sigma(n)}(p)\in U_k$. Hence, $\{v^{\sigma(n)}(p); \sigma\in\Sigma_p, n\in\mathbb{N}\}$ intersects every basis element $U_k$, and then it is dense in $X$.
	
\end{proof}

Define $Tran(X,{\mathfrak V})$ as the subset of $X$ of points $p$ such that there is $\sigma\in\Sigma_p$ with $\{v^{\sigma(n)}(p); n\in\mathbb{N}\}$ is dense in $X$. In this case we say that $T_p$ is the subset of $\Sigma_p$ such that for each $\sigma\in T_p$ we have that $\{v^{\sigma(n)}(p); n\in\mathbb{N}\}$ is dense in $X$. Note that the last lemma do not implies that $Tran(X,{\mathfrak V})$ is a nonempty set for generalized function systems with the Gluing Orbit Property.

And define $Tran(X,{\mathfrak V},\sigma)\subset Tran(X,{\mathfrak V})$ as the set of all points $p\in X$ such that $\{v^{\sigma(n)}(p); n\in\mathbb{N}\}$ is dense in $X$.  

\begin{lemma}\label{Tec1}
	Suppose that there are $\gamma>0$, $p\in Tran(X,{\mathfrak V},\sigma)$, and $m\in\mathbb{Z}^+$ such that $$d(v^{\sigma(n)}(p),v^{\sigma(n+m)}(p))\leq \gamma$$
	for each $n\in\mathbb{N}$. Then $$d(x,v^{\sigma(m)}(x))\leq\gamma,$$ for every $x\in\Sigma_\sigma$. 
\end{lemma}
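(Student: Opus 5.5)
The plan is a density-and-continuity argument. Fix $x\in\Sigma_\sigma$. Since $p\in Tran(X,{\mathfrak V},\sigma)$, the orbit $\{v^{\sigma(n)}(p);n\in\mathbb N\}$ is dense in $X$. I would choose a sequence $n_k\to+\infty$ with $y_k:=v^{\sigma(n_k)}(p)\to x$; taking $n_k\to\infty$ is possible unless $x$ is an isolated point that the orbit hits only finitely often, a case I would treat separately. The hypothesis gives, for every $k$,
$$d\bigl(y_k,\,v^{\sigma(n_k+m,n_k)}(y_k)\bigr)=d\bigl(v^{\sigma(n_k)}(p),v^{\sigma(n_k+m)}(p)\bigr)\leq\gamma .$$
Letting $k\to\infty$, the first argument tends to $x$. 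The closed condition $\leq\gamma$ then passes to the limit, provided the second argument tends to $v^{\sigma(m)}(x)$.

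That last convergence is the whole difficulty, and I expect it to be the main obstacle. The map applied to $y_k$ is the shifted block $v^{\sigma(n_k+m,n_k)}=v_{n_k+m}\circ\dots\circ v_{n_k+1}$. This is not $v^{\sigma(m)}=v_m\circ\dots\circ v_1$ unless the word $(v_{n_k+1},\dots,v_{n_k+m})$ coincides with $(v_1,\dots,v_m)$.

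My first attempt would be to select the $n_k$ so that this word agrees with the initial word of $\sigma$. The argument then runs as follows:
\begin{itemize}
\item Let $N=\{n:(v_{n+1},\dots,v_{n+m})=(v_1,\dots,v_m)\}$. This set is automatically infinite when $\sigma=\sigma_v$ is constant, and more generally when $\sigma$ is periodic.
\item Next I would show that $\{v^{\sigma(n)}(p);n\in N\}$ is still dense. For periodic $\sigma$ of period $q$, one can use that the orbit splits into $q$ subsequences and pick the one returning near $x$. Alternatively, one can replace $p$ by $v^{\sigma(j)}(p)$ for a suitable residue $j$.
\item For $n_k\in N$ we have $v^{\sigma(n_k+m,n_k)}=v^{\sigma(m)}$. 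This map is continuous on its closed domain. Since $y_k\in\Sigma_{\sigma(+\infty,n_k)}$ and domains are closed, $x$ lies in the domain and $v^{\sigma(m)}(y_k)\to v^{\sigma(m)}(x)$.
\end{itemize}
This gives $d(x,v^{\sigma(m)}(x))\leq\gamma$.

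If $\sigma$ has no such recurrence of its initial block, I would fall back on uniform continuity in the sense defined above. The aim would be to compare $v^{\sigma(n_k+m,n_k)}$ with $v^{\sigma(m)}$ near $x$, but this needs some extra hypothesis tying the tail maps to the initial ones. So I expect the lemma to be used, and provable, essentially in the setting where the relevant blocks of $\sigma$ repeat along a set of times on which the orbit of $p$ remains dense.
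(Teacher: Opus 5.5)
Your diagnosis is correct, and it is exactly where the paper's own one-line proof fails. The paper takes $v^{\sigma(n_k)}(p)\to x$ and writes $d(x,v^{\sigma(m)}(x))=\lim_k d(v^{\sigma(n_k)}(p),v^{\sigma(n_k+m)}(p))$. This silently uses $v^{\sigma(n_k+m)}=v^{\sigma(m)}\circ v^{\sigma(n_k)}$, which holds only when $(v_{n_k+1},\dots,v_{n_k+m})=(v_1,\dots,v_m)$, e.g.\ for $\sigma=\sigma_v$.

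The gap cannot be closed, because the lemma is false for general $\sigma$. Let $X=\mathbb R/\mathbb Z$ with $d(s,t)=\|s-t\|$ (distance to the nearest integer), $R_\beta(t)=t+\beta$, $\alpha$ irrational, and $\sigma=(R_{1/2},R_\alpha,R_\alpha,\dots)$. Then $v^{\sigma(n)}(p)=p+\tfrac12+(n-1)\alpha$ for $n\in\mathbb N$. So every $p$ lies in $Tran(X,{\mathfrak V},\sigma)$, and the hypothesis holds with $m=1$, $\gamma=\|\alpha\|$. Yet $d(x,v^{\sigma(1)}(x))=\tfrac12>\gamma$ for every $x$.

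Hence your proposal does not prove the statement, and no proof can. What you actually establish is the constant case $\sigma=\sigma_v$, and you should say so outright. Your fallback via uniform continuity cannot help either: all maps here are isometries, so $\sigma$ is uniformly continuous, even equicontinuous.

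Your extension to periodic $\sigma$ is also wrong. Passing to a residue class $n_k\equiv j \pmod q$ along which $v^{\sigma(n_k)}(p)\to x$ only yields $d(x,v^{\sigma(j+m,j)}(x))\le\gamma$, with a shifted block. Moreover, the times in your set $N$ need not see a dense part of the orbit. Here is a concrete example:
\begin{enumerate}
\item Let $X=C_1\sqcup C_2$ be two copies of $\mathbb R/\mathbb Z$ at mutual distance $1$.
\item Let $A(i,t)=(3-i,t+a_i)$ and $B(i,t)=(3-i,t+b_i)$, with $a_1+b_2=\alpha$ irrational and $a_2+b_1=\tfrac12$.
\item Let $\sigma=(A,B,A,B,\dots)$, $m=2$, and $p=(1,0)$.
\end{enumerate}
At even times the orbit lies in $C_1$, where $v^{\sigma(n+2,n)}=B\circ A$ is rotation by $\alpha$. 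At odd times it lies in $C_2$, where $v^{\sigma(n+2,n)}=A\circ B$ is again rotation by $\alpha$. Thus $p$ is transitive, and the hypothesis holds with $\gamma=\|\alpha\|$ for all $n\ge 0$, so the failure does not depend on $n$ starting at $1$. But $v^{\sigma(2)}=B\circ A$ rotates $C_2$ by $\tfrac12$. Here $N$ is the set of even integers and $\{v^{\sigma(n)}(p);n\in N\}\subset C_1$.

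The correct salvage is the extra hypothesis you gesture at in your last sentence: density of $\{v^{\sigma(n)}(p);n\in N\}$. It is needed for every block length used, since the next lemma and Theorem~\ref{T3} apply Lemma~\ref{Tec1} along a whole sequence $m_k$.
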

\begin{proof} Fix $x\in\Sigma_\sigma$, and take a sequence $\{n_k\}$ such that $$\lim_{k\rightarrow +\infty}v^{\sigma(n_k)}(p)=x.$$
	Hence for each $n\in\mathbb{N}$, we have 
	$$d(x,v^{\sigma(m)}(x))=\lim_{k\rightarrow +\infty}d(v^{\sigma(n_k)}(p),v^{\sigma(m+n_k)}(p))\leq\gamma.$$ 
\end{proof}


\begin{lemma}
	Suppose that $\sigma\in\Sigma$ is not uniformly rigid and $Tran(X,{\mathfrak V},\sigma)\neq\emptyset$. 
	Then there is $\gamma>0$ such that for each $p\in Tran(X,{\mathfrak V},\sigma)$, and  $m\in\mathbb N$, there is $\tau=\tau(p,\sigma,m)\in\mathbb{N}$ such that   $$d(v^{\sigma(\tau)}(p),v^{\sigma(\tau+m)}(p))>\gamma.$$ 
\end{lemma}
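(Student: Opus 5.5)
We argue by contradiction, using Lemma \ref{Tec1} as the key tool. Suppose no such $\gamma$ exists. Then for every $k\in\mathbb{N}$, taking $\gamma=1/k$, there are a point $p_k\in Tran(X,{\mathfrak V},\sigma)$ and an integer $m_k\in\mathbb N$ such that
$$d(v^{\sigma(n)}(p_k),v^{\sigma(n+m_k)}(p_k))\leq \tfrac1k \quad\text{for every } n\in\mathbb{N}.$$

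\emph{Step 1.} Apply Lemma \ref{Tec1} to the triple $(1/k,p_k,m_k)$. This gives
$$d(x,v^{\sigma(m_k)}(x))\leq \tfrac1k\quad\text{for every } x\in\Sigma_\sigma.$$
In particular $\sup_{x\in\Sigma_\sigma} d(v^{\sigma(m_k)}(x),x)\to 0$ as $k\to\infty$, so $v^{\sigma(m_k)}\to Id$ uniformly on $\Sigma_\sigma$.

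\emph{Step 2.} Arrange that $\{m_k\}$ is a genuine sequence of natural numbers as in the definition of a uniformly rigid path. If $\{m_k\}$ is unbounded, pass to a strictly increasing subsequence. If it is bounded, some value $m$ repeats infinitely often, and then $v^{\sigma(m)}=Id$ on $\Sigma_\sigma$. In that case $v^{\sigma(jm)}=Id$ for all $j$, since $v^{\sigma(m)}$ maps $\Sigma_\sigma$ into $\Sigma_{\sigma(+\infty,m)}$, or more directly by applying the inequality of Step 1 with $\gamma=0$ along multiples. Either way we obtain infinitely many times $m_k$ with $v^{\sigma(m_k)}\to Id$ uniformly. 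This says exactly that $\sigma$ is uniformly rigid, contradicting the hypothesis.

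The main obstacle is interpretive. The definition of uniform rigidity speaks of $v^{\sigma(m_k)}\to Id$ uniformly, while Lemma \ref{Tec1} only controls points of $\Sigma_\sigma$, which is the natural domain where all the iterates $v^{\sigma(n)}$ are defined. I will read "uniformly" as uniform convergence on $\Sigma_\sigma$ and note this explicitly. Once that convention is fixed, the contradiction is immediate.

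The quantifier order also deserves a remark. Negating the statement yields, for each $\gamma$, some $p$ and $m$ that fail; it is not enough that one $p$ fails for all $\gamma$. Since Lemma \ref{Tec1} turns a failure for any single $p\in Tran(X,{\mathfrak V},\sigma)$ into a global statement on $\Sigma_\sigma$, this is all we need, and the resulting $\gamma$ is uniform in $p$ and $m$.
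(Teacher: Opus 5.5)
Your main line is the paper's proof: negate with $\gamma=1/k$ (the paper uses $2^{-k}$), apply Lemma~\ref{Tec1} to get $\sup_{x\in\Sigma_\sigma}d(x,v^{\sigma(m_k)}(x))\to 0$, and conclude that $\sigma$ is uniformly rigid. Reading the uniform convergence on $\Sigma_\sigma$ is exactly what the paper does implicitly.

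Your Step~2 is unnecessary, because the paper's definition of a uniformly rigid path does not require $m_k\to\infty$. As written, its claim that $v^{\sigma(jm)}=Id$ is not justified in this non-autonomous setting. We have $v^{\sigma(2m)}=v^{\sigma(2m,m)}\circ v^{\sigma(m)}$, and $v^{\sigma(m)}=Id$ says nothing about $v^{\sigma(2m,m)}=v_{2m}\circ\cdots\circ v_{m+1}$. Likewise, the Step~1 inequality only controls the block starting at time $0$, so it cannot be iterated.

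If you do want $m_k\to\infty$, work with the infinitely many $k$ having $m_k=m$. Chain the hypothesis along the orbit of $p_k$ to get $d(v^{\sigma(n)}(p_k),v^{\sigma(n+jm)}(p_k))\le j/k$ for all $n$. Then apply Lemma~\ref{Tec1} with $jm$ in place of $m$, which gives $v^{\sigma(jm)}=Id$ on $\Sigma_\sigma$.
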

\begin{proof} Assume that for each $k\in\mathbb{Z}$, there are $p_k\in Tran(X,{\mathfrak V},\sigma)$ and $m_k\in\mathbb N$ such that $$d(v^{\sigma(n)}(p_k),v^{\sigma(n+m_k)}(p_k))\leq \frac{1}{2^k}, $$
for every $n\in\mathbb{N}$. By Lemma \ref{Tec1}, for each $x\in\Sigma_\sigma$, we have
$$d(x,v^{\sigma(m_k)}(x))\leq \frac{1}{2^k}, $$  
for every $k\in\mathbb{N}$. Therefore, $v^{\sigma(m_k)}\rightarrow Id$ uniformly. Hence $\sigma$ is uniformly rigid. 
\end{proof}

\section{Proof of Theorem \ref{T3}}\label{sec:t3}

Now we prove Theorem \ref{T3}, namely, that if $(X,{\mathfrak V})$ is an iterated function system with the Gluing Orbit property, 
with $Tran(X,{\mathfrak V},\sigma)\neq\emptyset$ for some $\sigma\in\Sigma$ that is not uniformly rigid, then $h(X,{\mathfrak V})>0$.  The proof is based on \cite[Proposition 3.3]{PSun}.

\begin{proof} 
Let $\sigma$ be as in the condition and let  $p\in Tran(X,{\mathfrak V},\sigma)$. By the previous lemma, there is $\gamma>0$ such that for every $k\in\mathbb N$, there is $\tau_k$ with 
	\begin{equation}\label{eq:2} d(v^{\sigma(\tau_k)}(p),v^{\sigma(\tau_k+k)}(p))>\gamma. 
	\end{equation}	
For any $\varepsilon\in (0,\frac{\gamma}{3})$, let $M$ be as in the definition of Gluing Orbit property and  let $$T=2M+\max\{\tau_k; k=1,...,2M\}.$$ Define, $m_1=T+M$ and $m_2=T$. And for each sequence $a=\{a(k)\}_{k=1}^{+\infty}\in\{1, 2\}^{\mathbb N}$, 
 denote
$$\mathcal{C}_a=\{(p,\sigma,m_{a(k)}+1)\}_{k=1}^{+\infty}.$$  
By the Gluing orbit property there are $z_a\in X$, $\sigma_a\in\Sigma_{z_a}$ and $\mathcal{G}_a=\{t_k(a)\}_{k=1}^{+\infty}\in\Sigma_M$ such that $(\mathcal{C}_a,\mathcal{G}_a)$ is $\varepsilon$-traced by $(z_a,\sigma_a)$.

\textbf{Claim:} Fix $N\in\mathbb{N}$. If $a(n)\neq b(n)$ for some $1\leq n\leq N$, then $(z_a,\sigma_a)$ and $(z_b,\sigma_b)$ are $((N+1)(T+2M),\varepsilon)$-separated.

If the claim is correct, for each $N$ there is an $((N+1)(T+2M),\varepsilon)$-separated set with cardinality $2^N$. Hence
$$h(X,{\mathfrak V})\geq\lim_{N\rightarrow\infty}\frac{\ln2^N}{(N+1)(T+2M)}=\frac{\ln2}{T+2M}>0.$$
Now we shall prove the claim. Without loss of generality, we can assume that $a(k)=b(k)$ for each $k\in\{1, 2, ..., n-1\}$, and $a(n)=1\neq 2=b(n)$.

\textbf{Case 1:} The "gaps" are the same, i.e., $$t_k(a)=t_k(b) \text{ for each } k\in\{1,...,n-1\}.$$

Therefore,
$$s_n=\sum_{k=1}^{n-1}(m_{a(k)}+t_k(a))=\sum_{k=1}^{n-1}(m_{b(k)}+t_k(b))\leq (n-1)(T+2M).$$
Hence, the orbits of $(z_a,\sigma_a)$ and $(z_b,\sigma_b)$ start to trace the $n$-th orbit segments of $\mathcal{C}_a$ and $\mathcal{C}_b$, respectively, at the same time. Since the $n$-th orbit segments of $\mathcal{C}_a$ and $\mathcal{C}_b$ have different lengths, and $(n+1)$-th orbit segments has lengths no less than $T+1$, we have
\begin{equation}\label{eq:3}
	d(v^{\sigma_a(s_n+T+M+t_n(a)+l)}(z_a),v^{\sigma(l)}(p))\leq\varepsilon,  
\end{equation} 
\begin{equation}\label{eq:4}
	d(v^{\sigma_b(s_n+T+t_n(b)+l)}(z_b),v^{\sigma(l)}(p))\leq\varepsilon,   
\end{equation}
for each $l\in\{1,...,T\}$.    

Let $r=M+(t_n(a)-t_n(b))$ and then we have that $r\in [1,2M]$. By \eqref{eq:2}  
we have that $$d(v^{\sigma(\tau_r)}(p),v^{\sigma(\tau_r+r)}(p))>\gamma.$$
And, by definition of $T$, we have,
$$ r+\tau_r<2M+\tau_r\leq T.$$
Therefore, we can apply \eqref{eq:3} and \eqref{eq:4} for $\tau_r$ and $r+\tau_r$ respectively, and hence
\begin{align*} 
	&\quad d(v^{\sigma_a(s_n+T+M+t_n(a)+\tau_r)}(z_a),v^{\sigma_b(s_n+T+M+t_n(a)+\tau_r)}(z_b)) \\
	&\geq d(v^{\sigma(\tau_r)}(p),v^{\sigma(\tau_r+r)}(p))-d(v^{\sigma_a(s_n+T+M+t_n(a)+\tau_r)}(z_a),v^{\sigma(\tau_r)}(p))  \\&\quad-d(v^{\sigma_b(s_n+T+M+t_n(a)+\tau_r)}(z_b),v^{\sigma(\tau_r+r)}(p))\\&>\gamma-2\varepsilon>\varepsilon.
\end{align*}
Moreover,
\begin{multline*}
s_n+T+M+t_n(a)+\tau_r\leq (n-1)(T+2M)+T+2M+(T-2M)\\<(N+1)(T+2M), \end{multline*}
that is an estimate of the separation time.

\textbf{Case 2:} If $t_k(a)\neq t_k(b)$ for some $k\in\{1,...,n-1\}$ then define $$L=\min \{k\in\mathbb{N};t_k(a)\neq t_k(b)\}.$$
Note that $L\leq n-1$. Without loss of generality we may assume that $t_L(a)>t_L(b)$ and let $r=t_L(a)-t_L(b)\in \{1,...,M-1\}$. Hence, by \eqref{eq:2}, we have $$d(v^{\sigma(\tau_r)}(p),v^{\sigma(\tau_r+r)}(p))>\gamma.$$ 
Hence, $$s=\sum_{k=1}^L(m_{a(k)}+t_k(a))=\sum_{k=1}^L(m_{b(k)}+t_k(b))+r\leq L(T+2M).$$
Therefore, for each $l\in\{0,1,...,T\}$ we have that
\begin{equation*}
	d(v^{\sigma_a(s+l)}(z_a),v^{\sigma(l)}(p))\leq \varepsilon \text{ and } 
	d(v^{\sigma_b(s-r+l)}(z_b),v^{\sigma(l)}(p))\leq \varepsilon.	
\end{equation*}
Then, 
\begin{align*}
	&\quad d(v^{\sigma_a(s+\tau_r)}(z_a),v^{\sigma_b(s+\tau_r)}(z_b))\\
	&\geq d(v^{\sigma(\tau_r)}(p),v^{\sigma(r+\tau_r)}(p)) 
	-d(v^{\sigma_a(s+\tau_r)}(z_a),v^{\sigma(\tau_r)}(p))\\ &\quad -d(v^{\sigma_b(s+\tau_r)}(z_b),v^{\sigma(r+\tau_r)}(p))\\
	&>\gamma-2\varepsilon>\varepsilon.
\end{align*}
This concludes the proof.

\end{proof}

\noindent {\bf Acknowledgments.} W.~Cordeiro thanks the Federal University of Rio de Janeiro for its hospitality during the preparation of this work. %

%
%
%
%
%
%
%
%
%
%
%
%
%
%
%
%
%


\begin{thebibliography}{1}

\bibitem{CL} Dandan Cheng and Zhiming Li.
\newblock Upper metric mean dimensions for impulsive semi-flows.
\newblock {\em J. Differential Equations}, 311:81--97, 2022. 

\bibitem{CordeiroDenkerYuri2015}
Welington Cordeiro, Manfred Denker, and Michiko Yuri.
\newblock A note on specification for iterated function systems.
\newblock {\em Discrete Contin. Dyn. Syst. Ser. B}, 20(10):3475--3485, 2015.

\bibitem{Denker2023}
Manfred Denker.
\newblock General iterated function systems: {H}ausdorff dimension.
\newblock {\em Chaos}, 33(3):Paper No. 033118, 5, 2023.

\bibitem{DenkerYuri2015}
Manfred Denker and Michiko Yuri.
\newblock Conformal families of measures for general iterated function systems.
\newblock In {\em Recent trends in ergodic theory and dynamical systems},
  volume 631 of {\em Contemp. Math.}, pages 93--108. Amer. Math. Soc.,
  Providence, RI, 2015.

\bibitem{GH}
Walter Helbig Gottschalk and Gustav Arnold Hedlund.
\newblock {\em Topological dynamics}.
\newblock volume 36 of {\em Amer. Math. Soc. Colloq. Publ.}, Amer. Math. Soc., Providence, RI, 1955.

\bibitem{G} 
Misha Gromov. 
\newblock{Topological invariants of dynamical systems and spaces of holomorphic maps: I}. 
\newblock {\em Math. Phys. Anal. Geom.}, 2:323--415, 1999. 

\bibitem{IvandovIlliadisTuzhilin2016} 
Alexander Ivanov, Stavros Iliadis, and Alexey Tuzhilin.
\newblock Realizations of Gromov–Hausdorff distance.
\newblock {\em arXiv}:1603.08850, 2016.

\bibitem{LindenstraussWeiss2000}
Elon Lindenstrauss and Benjamin Weiss.
\newblock Mean topological dimension.
\newblock {\em Israel J. Math.}, 115:1--24, 2000.

\bibitem{RA}
Fagner B. Rodrigues and Jeovanny Muentes Acevedo. \newblock Mean dimension and metric mean dimension for non-autonomous dynamical systems. 
\newblock {\em J. Dyn. Control Syst.}, 28:697--723, 2022. 

\bibitem{Sun2019}
Peng Sun.
\newblock Minimality and gluing orbit property.
\newblock {\em Discrete Contin. Dyn. Syst.}, 39(7):4041--4056, 2019.

\bibitem{PSun}
Peng Sun.
\newblock Zero-entropy dynamical systems with the gluing orbit property.
\newblock {\em Adv. Math.}, 372:107294, 2020.

\bibitem{TYM} 
Yanjie Tang, Xiaojiang Ye and Dongkui Ma.
\newblock Metric mean dimension of free semigroup actions for non-compact sets. 
\newblock {\em J. Dyn. Control. Syst.}, 30:12, 2024.

\bibitem{Walters1982}
Peter Walters.
\newblock {\em An introduction to ergodic theory}, volume~79 of {\em Graduate Texts in Mathematics}.
\newblock Springer-Verlag, New York-Berlin, 1982.

\end{thebibliography}
\end{document}